\newcommand{\df}{\dfrac}
\newtheorem{theorem}{Theorem}[section]
\newtheorem{lemma}[theorem]{Lemma}
\newtheorem{corollary}[theorem]{Corollary}
\renewcommand{\a}{\alpha}
\begin{document}

\title[Partitions Associated With the Mock Theta Functions $\omega(q), \nu(q)$ and $\phi(q)$]{Partitions Associated With the Ramanujan/Watson Mock Theta Functions $\omega(q)$, $\nu(q)$ and $\phi(q)$} 

\author{George E. Andrews}
\address{Department of Mathematics, The Pennsylvania State University, University Park, PA 16802,
USA} \email{gea1@psu.edu}

\author{Atul Dixit}
\address{Department of Mathematics, Tulane University, New Orleans, LA, 70118, USA} 
\email{adixit@tulane.edu}

\author{Ae Ja Yee}
\address{Department of Mathematics, The Pennsylvania State University,  University Park, PA 16802,
USA} \email{auy2@psu.edu}

\footnotetext[1]{Keywords: partitions, generating functions, smallest parts functions, mock theta functions, pentagonal number theorem}

\footnotetext[2]{2000 AMS Classification Numbers: Primary, 11P81; Secondary, 05A17}

\footnotetext[3]{None of the authors have any competing interests in the manuscript.}

\maketitle

\begin{abstract}
The generating function of partitions with repeated (resp. distinct) parts such that each odd part is less than twice the smallest part is shown to be the third order mock theta function $\omega(q)$ (resp. $\nu(-q)$). Similar results for partitions with the corresponding restriction on each even part are also obtained, one of which involves the third order mock theta function $\phi(q)$. Congruences for the smallest parts functions associated to such partitions are obtained. Two analogues of the partition-theoretic interpretation of Euler's pentagonal theorem are also obtained.
\end{abstract}

\section{Introduction}
Partition-theoretic interpretations of various results involving mock theta functions have been the subject of intense study for many decades. For instance, Andrews and Garvan \cite{andrewsgarvan} reduced the proofs of ten identities for the fifth order mock theta functions given in Ramanujan's Lost Notebook \cite[p.~18--20]{lnb} to proving two conjectures based on the rank of a partition, and on the number of partitions of an integer with unique smallest part and all other parts less than or equal to the double (or one plus the double) of the smallest part. These conjectures, titled as Mock Theta Conjectures, were first proved by Hickerson \cite{hickerson}. 

A mock theta function itself may also admit a simple and interesting combinatorial interpretation. For example, consider $\chi_{1}(q)$, one of the fifth order mock theta functions of Ramanujan, defined by \cite[p.~278]{watson}
\begin{equation*}
\chi_1(q):=\sum_{n=0}^{\infty}\frac{q^n}{(q^{n+1};q)_{n+1}},
\end{equation*}
where $(a;q)_n:=\prod_{j=0}^{n-1}(1-aq^j)$.

It is easy to see that $q\chi_{1}(q)$ is the generating function for partitions in which no part is as large as twice the smallest part. Similarly another fifth order mock theta function \cite[p.~278]{watson}, namely 
\begin{equation*}
\chi_0(q):=\sum_{n=0}^{\infty}\frac{q^n}{(q^{n+1};q)_{n}},
\end{equation*}
can be interpreted \cite[Lemma 2]{geaglasgow} as the generating function for partitions with unique smallest part and the largest part at most twice the smallest part. There are other mock theta functions that admit combinatorial interpretations when different restrictions are put on parts of the corresponding partitions, for example the third order mock theta function $\psi(q)$ \cite[p.~57]{fine}, and the seventh order mock theta function $f(q)$ \cite[Theorem 4]{geasinica}.

In the present paper, among other things, we consider partitions of an integer in which each odd (even) part is less than (at most) twice the smallest part. For the odd case, the generating function surprisingly turns out to be $q\omega(q)$ (see Theorem \ref{qomegaq}), where $\omega(q)$ is the third order mock theta function due to Ramanujan and Watson defined by \cite{geabcblnb5}, \cite[p.~15]{lnb}, \cite[p.~62]{watson0}
\begin{align}\label{omegaq}
\omega(q):=\sum_{n=0}^{\infty} \frac{q^{2n^2+2n}}{(q;q^2)^2_{n+1}}. 
\end{align}
If we put an additional restriction that the parts be distinct, then the generating function of such partitions is $\nu(-q)$ (see Theorem \ref{nuqthm}), where $\nu(q)$ is another third order mock theta function defined by  \cite{geabcblnb5}, \cite[p.~31]{lnb}, \cite[p.~62]{watson0}
\begin{align}\label{nuq}
\nu(q):=\sum_{n=0}^{\infty} \frac{q^{n^2+n}}{(-q;q^2)_{n+1}}.
\end{align}
For partitions in which each even part is less than or equal to twice the smallest part, the generating function, as can be seen from Theorem \ref{rq}, involves the function $R(q)$ defined by
 \begin{align}\label{rqdef}
R(q):=\sum_{n=0}^{\infty} \frac{q^{\binom{n+1}{2}}}{(-q;q)_n}.
\end{align}
The function $R(q)$ appears in the Lost Notebook and was analyzed in \cite{geaepi}. Again restricting the parts in such partitions to be distinct leads us to a yet
 another third order mock theta function as a part of a representation for the generating function for these partitions (see Theorem \ref{phiqthm}). This mock theta function, namely
\begin{align}\label{phiq}
\phi(q):=\sum_{n=0}^{\infty} \frac{q^{n^2}}{(-q^2;q^2)_n},
\end{align}
itself does not have a simple partition-theoretic interpretation \cite[p.~58]{fine}.

Let $p_{\omega}(n)$ denote the number of partitions of $n$ in which each odd part is less than twice the smallest part. As discussed previously, 
\begin{equation*}
\sum_{n=1}^{\infty}p_{\omega}(n)q^n=q\omega(q).
\end{equation*}
Garthwaite and Penniston \cite{garpen} showed that the coefficients $a_{\omega}(n)$ of $\omega(q)$ satisfy infinitely many congruences of the similar type as Ramanujan's partition congruences. It is trivial to see that $p_{\omega}(n)=a_{\omega}(n-1)$. Waldherr \cite{waldherr} proved first explicit examples of such congruences, suggested by some computations done by Jeremy Lovejoy, which in terms of $p_{\omega}(n)$, can be written as
\begin{align*}
p_{\omega}(40n+28)&\equiv 0 \pmod{5},\\
p_{\omega}(40n+36)&\equiv 0 \pmod{5}.
\end{align*}
Let $p_{\nu}(n)$ denote the number of partitions of $n$ into distinct parts in which each odd part is less than twice the smallest part. 

The smallest parts function $\textup{spt}(n)$, counting the total number of appearances of the smallest parts in all partitions of $n$, has received great attention since it was introduced in \cite{gea266}. For generalizations and analogues of $\textup{spt}(n)$, we refer the reader to \cite{ack}, \cite{blo}, \cite{dixityee}, \cite{garvan}, \cite{cjs} and \cite{cjs1}.

If $\textup{spt}_{\omega}(n)$ and $\textup{spt}_{\nu}(n)$ denote the number of smallest parts in the partitions enumerated by $p_{\omega}(n)$ and $p_{\nu}(n)$ respectively, then we show that the following congruences hold:
{\allowdisplaybreaks\begin{align*}
\text{spt}_{\omega} (5n+3) &\equiv 0 \pmod{5},\nonumber\\
\text{spt}_{\omega} (10n+7) &\equiv 0 \pmod{5},\nonumber\\
\textup{spt}_{\omega} (10n+9) &\equiv 0 \pmod{5},\nonumber\\
\text{spt}_{\nu} (10n+8) &\equiv 0 \pmod{5}.
\end{align*}}
As shown in Section \ref{cong}, the first three congruences result from the following $q$-series identity:
\begin{align}\label{baileydiff1}
\sum_{n=1}^{\infty}\frac{(q;q)_nq^n}{(q;q^2)_n(1-q^n)^2}=\sum_{n=1}^{\infty}\frac{nq^n}{1-q^n}+\sum_{n=1}^{\infty}\frac{(-1)^n(1+q^{2n})q^{n(3n+1)}}{(1-q^{2n})^2}.
\end{align}
In fact, it was this identity that led us to investigate the aforementioned partitions and their connection with third order mock theta functions. This identity can be obtained by taking the second derivative of a special case of a ${}_{10}\phi_{9}$-transformation (see Equation \eqref{baitra} below) due to Bailey \cite[Equation (2.10)]{andrews1984}, \cite[Equation (6.3)]{bailey}. The details are given in Section \ref{cong}. 

Differentiatiation of identities in basic hypergeometric series with respect to a certain variable and then specializing them has been proved to be very useful in obtaining other important $q$-series identities \cite{geaonofri}, \cite{cdg}, in constructing new Bailey pairs \cite{geaillinois}. It also yields results with important partition-theoretic implications \cite{gea266}, \cite{agl}, \cite{dixityee}. Bailey's aforementioned ${}_{10}\phi_{9}$-transformation is no exception to this. 

We note that the series similar to the left-hand side of \eqref{baileydiff1}, namely
\begin{align*}
\sum_{n=1}^{\infty}\frac{(q;q)_nq^n}{(q^2;q^2)_n(1-q^n)^2},
\end{align*}
does not, however, seem to have a representation similar to that in \eqref{baileydiff1}. Also there do not seem to hold the corresponding congruences for the smallest parts function associated to the partitions in which each even part is less than or equal to twice the smallest part.

This paper is organized as follows. The preliminary results are provided in Section \ref{prelim}. In Section \ref{prp}, we consider partitions in which each odd (even) part is less than (at most) twice the smallest part, and in which repetition of parts is allowed. Such partitions where only the smallest part is not allowed to repeat are also studied. Section \ref{pdp} is devoted to studying partitions into distinct parts in which each odd (even) part is less than (at most) twice the smallest part. Two analogues of Euler's pentagonal number theorem \cite[p.~11, Corollary 1.7]{gea} are obtained in Section \ref{eulerpenta}. The congruences satisfied by the smallest parts functions associated with some of the partitions considered in Sections \ref{prp} and \ref{pdp} are proved in Section \ref{cong}. Finally in Section \ref{genmock}, we generalize two of the results obtained in the previous sections to those involving generalized third order mock theta functions.

\section{Preliminary results}\label{prelim}
We give below the standard results from the literature which will be used in the sequel. Throughout the paper, $q$ denotes a complex number such that $|q|<1$. Euler's identity \cite[p. 222, Equation (8.10.9)]{gasper}, which shows that the number of partitions into distinct parts equals that of partitions into odd parts, is expressed in terms of generating functions by
\begin{equation}\label{ei}
(-q;q)_{\infty}=\frac{1}{(q;q^2)_{\infty}},
\end{equation}
where $(a;q)_{\infty}:=\lim_{n\to\infty}(a;q)_{n}$. 

For $|z|<1$, the $q$-binomial theorem is given by \cite[p.~17, Equation (2.2.1)]{gea}
\begin{equation}\label{qbin}
\sum_{n=0}^{\infty}\frac{(a;q)_{n}z^n}{(q;q)_n}=\frac{(az;q)_{\infty}}{(z;q)_{\infty}}.
\end{equation}
Its special case $a=0$ gives another useful formula due to Euler \cite[p.~19, Equation (2.2.5)]{gea}:
\begin{equation}\label{euleri}
\sum_{n=0}^{\infty}\frac{z^n}{(q;q)_n}=\frac{1}{(z;q)_{\infty}}.
\end{equation}
Ramanujan's ${}_1\psi_{1}$ summation formula \cite[p.239, (II 29)]{gasper} is given by
\begin{equation}\label{1psi1sf}
\sum_{n=-\infty}^{\infty}\frac{(a;q)_{n}}{(b;q)_{n}}z^n=\frac{(az;q)_{\infty}(q/(az);q)_{\infty}(q;q)_{\infty}(b/a;q)_{\infty}}{(z;q)_{\infty}(b/(az);q)_{\infty}(b;q)_{\infty}(q/a;q)_{\infty}}.
\end{equation}
Let the Gaussian polynomial $\left[\begin{matrix} n\\m\end{matrix}\right]$ be defined by \cite[p.~35]{gea}
\begin{equation*}
\left[\begin{matrix} n\\m\end{matrix}\right]=\left[\begin{matrix} n\\m\end{matrix}\right]_{q}:=
\begin{cases}
(q;q)_{n}(q;q)_m^{-1}(q;q)_{n-m}^{-1},\hspace{2mm}\text{if}\hspace{1mm}0\leq m\leq n,\\
0,\hspace{2mm}\text{otherwise}.
\end{cases}
\end{equation*}
From \cite[p.~36, Equation (3.3.7)]{gea}, we have
\begin{align}
(z;q)_{N}^{-1}=\sum_{j=0}^{\infty}\left[\begin{matrix} N+j-1\\ j\end{matrix} \right]z^j\label{finpoc1},
\end{align}
and from \cite[p.~37, Equation (3.3.8)]{gea},
\begin{equation}\label{338}
\sum_{j=0}^{m}(-1)^j\left[\begin{matrix} m\\ j\end{matrix} \right]=
\begin{cases}
(q;q^2)_n, \hspace{2mm}\text{if}\hspace{1mm} m=2n,\\
0, \hspace{2mm}\text{if}\hspace{1mm} m\hspace{1mm}\text{is odd}.
\end{cases}
\end{equation}
Moreover \cite[p.~21, Corollary 2.7]{gea},
\begin{equation}\label{baicor}
\sum_{n=0}^{\infty}\frac{(a;q)_nq^{n(n+1)/2}}{(q;q)_n}=(-q;q)_{\infty}(aq;q^2)_{\infty}.
\end{equation}
Next, a $q$-analogue of Gauss' second theorem is given by \cite[Equation (1.8)]{gea55}
\begin{equation}\label{gst}
\sum_{n=0}^{\infty}\frac{(a;q)_n(b;q)_{n}q^{n(n+1)/2}}{(q;q)_n(qab;q^2)_n}=\frac{(-q;q)_{\infty}(aq;q^2)_{\infty}(bq;q^2)_{\infty}}{(qab;q^2)_{\infty}}.
\end{equation}
The following four-parameter $q$-series identity \cite[p. 141, Theorem 1]{gea90} will be frequently used in the proofs of our theorems:
\begin{align}
& \sum_{n=0}^{\infty} \frac{(B;q)_n (-Abq;q)_n q^n}{(-aq;q)_n (-bq;q)_n} \notag \\
&=\frac{-a^{-1} (B;q)_{\infty} (-Abq;q)_{\infty}}{(-bq;q)_{\infty} (-aq;q)_{\infty}} \sum_{m=0}^{\infty} \frac{(A^{-1};q)_m \left(\frac{Abq}{a}\right)^m}{\left(-\frac{B}{a};q\right)_{m+1}}\nonumber\\
&\quad+(1+b) \sum_{m=0}^{\infty} \frac{(-a^{-1};q)_{m+1} \left( -\frac{ABq}{a};q\right)_{m} (-b)^m}{\left(-\frac{B}{a};q\right)_{m+1} \left(\frac{Abq}{a};q\right)_{m+1}}. \label{gea90_thm1}
\end{align}
Finally, we note Bailey's ${}_{10}\phi_{9}$ transformation \cite[Equation (2.10)]{andrews1984}, \cite[Equation (6.3)]{bailey}:
\begin{multline}\label{baitra}
\lim_{N\to\infty}{}_{10}\phi_{9}\left(\begin{matrix} a,& q^2\sqrt{a},& -q^2\sqrt{a},& p_1, &
p_1q, &p_2, & p_2q, & f, & q^{-2N}, & q^{-2N+1}\\
 &\sqrt{a}, & -\sqrt{a},& \df{a
q^2}{p_1}, & \df{a q}{p_1}, & \df{a q^2}{p_2}, & \df{a q}{p_2}, & \df{aq^2}{f}, & aq^{2N+2}, & aq^{2N+1}
\end{matrix}\,; q^2,
 \df{a^3q^{4N+3}}{p_1^{2}p_2^{2}f}\right) \\
=\df{(a q;q)_{\infty}\left(\df{a q}{p_1p_2};q\right)_{\infty}}
{\left(\df{a q}{p_1};q\right)_{\infty}\left(\df{a q}{p_2};q\right)_{\infty}}\sum_{n=0}^{\infty}\frac{(p_1;q)_n(p_2;q)_n\left(\frac{aq}{f};q^2\right)_n}{(q;q)_n(aq;q^2)_n\left(\frac{aq}{f};q\right)_n}\left(\frac{aq}{p_1p_2}\right)^n,
\end{multline}
where
\begin{equation*}\label{bhs}
{}_r\phi_{s}\left(\begin{matrix} a_1, a_2, \ldots, a_r\\
  b_1,  b_2, \ldots, b_{s} \end{matrix}\,; q,
z \right) :=\sum_{n=0}^{\infty} \frac{(a_1;q)_n (a_2;q)_n \cdots (a_r;q)_n}{(q;q)_n (b_1;q)_n \cdots (b_{s};q)_n} z^n.
\end{equation*}
\section{ Partitions with repeated parts}\label{prp}

\begin{theorem}\label{qomegaq}
Let $\omega(q)$ be defined in \eqref{omegaq}. Then,
\begin{align}
\sum_{n=1}^{\infty} \frac{q^n}{(1-q^n)(q^{n+1};q)_n (q^{2n+2};q^2)_{\infty}}
=q\, \omega(q). \label{thm1}
\end{align}
\end{theorem}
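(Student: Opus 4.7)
The plan is to reduce \eqref{thm1} to a clean one-variable $q$-series identity by consolidating the Pochhammer symbols, and then to prove the reduced identity by a specialization of the four-parameter identity \eqref{gea90_thm1}.

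First, I would view the left-hand side as the generating function for partitions classified by their smallest part $n$: the factor $q^n/(1-q^n)$ supplies at least one copy of $n$, the factor $1/(q^{n+1};q)_n$ generates arbitrarily many further parts drawn from $\{n+1,\dots,2n\}$, and $1/(q^{2n+2};q^2)_\infty$ supplies arbitrarily many even parts $\ge 2n+2$, so that no odd part exceeds $2n-1<2n$. This interpretation confirms that the LHS enumerates the intended partitions and provides the motivation for the subsequent algebraic manipulation.

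Using the identities $(q^{n+1};q)_n=(q;q)_{2n}/(q;q)_n$, $(q;q)_{2n}=(q;q^2)_n(q^2;q^2)_n$, and $(q^{2n+2};q^2)_\infty=(q^2;q^2)_\infty/(q^2;q^2)_n$, the $n$th summand collapses to $q^n(q;q)_{n-1}/[(q;q^2)_n(q^2;q^2)_\infty]$. Pulling the prefactor $1/(q^2;q^2)_\infty$ outside the sum and reindexing $n\mapsto n+1$, the theorem becomes equivalent to
\[
\sum_{n=0}^{\infty}\frac{q^n(q;q)_n}{(q;q^2)_{n+1}}=(q^2;q^2)_\infty\,\omega(q).
\]

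For this reduced identity, I would invoke \eqref{gea90_thm1}, the workhorse of the paper. A natural attempt is to set $B=q$ (so that $(q;q)_n$ appears in the numerator on the left of \eqref{gea90_thm1}) and to choose $A$, $a$, $b$ so that the quotient $(-Abq;q)_n/[(-aq;q)_n(-bq;q)_n]$ reduces to $1/(q;q^2)_{n+1}$ up to a harmless constant. Under such a specialization, I expect the first of the two sums on the right of \eqref{gea90_thm1} to terminate or telescope into an explicit product (providing the factor $(q^2;q^2)_\infty$), while the second sum, after simplification with the aid of \eqref{finpoc1} or \eqref{baicor}, should reassemble into the series $\sum_{n\ge 0} q^{2n^2+2n}/(q;q^2)_{n+1}^2$ that defines $\omega(q)$.

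The principal obstacle will be the base mismatch: \eqref{gea90_thm1} is stated in base $q$, while $(q;q^2)_{n+1}$ is naturally a base-$q^2$ symbol, so arranging $(-aq;q)_n(-bq;q)_n$ to be a constant multiple of $(q;q^2)_{n+1}$ (or a near relative) will require careful parameter bookkeeping. In particular, the quadratic exponent $q^{2n^2+2n}$ inside $\omega(q)$ must emerge from the $(-b)^m$ factors combined with the Pochhammer tails on the right of \eqref{gea90_thm1}, which pins down the choice of $b$ tightly; once this choice is made, the remaining parameters and the compensating constant should be forced, and the identification of the two sides becomes a matter of standard manipulation.
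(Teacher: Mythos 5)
Your reduction is fine: collapsing the Pochhammer symbols and reindexing does turn \eqref{thm1} into $\sum_{n\ge 0} q^n(q;q)_n/(q;q^2)_{n+1}=(q^2;q^2)_\infty\,\omega(q)$, which is exactly the paper's own starting point (the third line of \eqref{string} after rescaling, or \eqref{15}). The gap is in what you expect the specialization of \eqref{gea90_thm1} to deliver. To make the left side of \eqref{gea90_thm1} equal to $\sum_n q^n(q;q)_n/(q^3;q^2)_n$ you are essentially forced into $\{-aq,-bq\}=\{q^{3/2},-q^{3/2}\}$ together with either $B=q$, $A\to 0$, or $B\to 0$, $-Abq=q$; these are precisely the two specializations the paper uses. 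In neither case does one of the two sums on the right become the product $(q^2;q^2)_\infty$ while the other reassembles into $\sum_n q^{2n^2+2n}/(q;q^2)_{n+1}^2$. With $B=q$, $A\to 0$ the first sum is $\sum_m q^{\binom{m+1}{2}}/(-q^{1/2};q)_{m+1}$, i.e.\ (after $q\to q^2$) the mock theta function $\nu(q)$ in Eulerian form, and the second sum is a Lambert-type series, essentially $\sum_m q^{m/2}/(1+q^{1/2+m})$; with $B\to 0$ the first sum is the partial theta series $\sum_m (q^{1/2};q)_m(-q^{1/2})^m$. The function $\omega(q)$ never appears directly from \eqref{gea90_thm1} by parameter bookkeeping, so the step you describe as ``standard manipulation'' is where the argument stalls.

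What closes the argument in the paper, and is missing from your outline, is a genuinely external bridge between these outputs and $\omega$: either Fine's third-order relation $\nu(q)+q\,\omega(q^2)=(-q^2;q^2)_\infty^3(q^2;q^2)_\infty$ combined with the ${}_1\psi_1$-evaluation \eqref{1psi1} of the Lambert series (first proof), or the partial theta identity \eqref{21}/\eqref{xi}, namely $\sum_m(q;q^2)_m(-q)^m=(-q^2;q^2)_\infty\psi(q^2)-q\,\omega(q^2)$, whose proof takes up a substantial part of Theorem \ref{nuqthm}, together with Lemma \ref{lemma1} and the same ${}_1\psi_1$ input (second proof). Without one of these ingredients your specialization leaves you with $\nu$ (or a partial theta function) plus a Lambert series, not with $q\,\omega(q)$. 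So your skeleton --- reduce, then apply \eqref{gea90_thm1} --- matches the paper, but the proposal as written predicts the wrong structure for the right-hand side and omits the key identity needed to produce the mock theta function $\omega$.
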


\noindent{\bf Remark.} The series given in the theorem is clearly the generating function for partitions in which each odd part is less than twice the smallest part. 

\proof
First from \cite[p.62, Equation (26.88)]{fine},
\begin{align*}
\nu(q)+q\,\omega(q^2)=(-q^2;q^2)_{\infty}^3 (q^2;q^2)_{\infty}, 
\end{align*}
where $\nu(q)$ is defined in \eqref{nuq}.

Second,  in \eqref{gea90_thm1}, replace $q$ by $q^2$, then set $B=q^2, a=-b=q$,  and let $A\to 0$. This yields
\begin{align}
\sum_{n=0}^{\infty} \frac{(q^2;q^2)_n q^{2n}} {(q^6;q^4)_n}=\frac{-q^{-1} (q^2;q^2)_{\infty}}{(q^6;q^4)_{\infty}} \sum_{m=0}^{\infty} \frac{q^{m^2+m}}{(-q;q^2)_{m+1}} +(1-q) \sum_{m=0}^{\infty} \frac{(-q^{-1};q^2)_{m+1} q^m}{(-q;q^2)_{m+1}}.\label{5}
\end{align}

Finally replace $q$ by $q^2$ in \eqref{1psi1sf}, then set $a=-q, b=-q^3, z=q$ and simplify to obtain
\begin{align}
\sum_{m=0}^{\infty} \frac{q^{m}}{1+q^{2m+1}} =\frac{(q^4;q^4)_{\infty}^2}{(q^2;q^4)_{\infty}^2}.\label{1psi1}
\end{align}

We are now ready to prove the theorem. 

{\allowdisplaybreaks\begin{align}\label{string}
&\sum_{n=1}^{\infty} \frac{q^{2n}}{(1-q^{2n})(q^{2n+2};q^2)_n (q^{4n+4};q^4)_{\infty}}\nonumber\\
&=\sum_{n=0}^{\infty} \frac{q^{2n+2} (q^2;q^2)_{n}}{(q^{2};q^2)_{2n+2} (q^{4n+8};q^4)_{\infty}}\nonumber\\
&=\frac{1}{(q^4;q^4)_{\infty}} \sum_{n=0}^{\infty} \frac{q^{2n+2} (q^2;q^2)_{n}}{(1-q^2)(q^{6};q^4)_{n}}\nonumber\\
&=\frac{q^2}{(q^4;q^4)_{\infty} (1-q^2)} \sum_{n=0}^{\infty} \frac{q^{2n} (q^2;q^2)_{n}}{(q^{6};q^4)_{n}}\nonumber\\
&=\frac{q^2}{(q^4;q^4)_{\infty} (1-q^2)} \left( -q^{-1}(1-q^2)(q^4;q^4)_{\infty} \;\nu(q)+ (1-q) (1+q^{-1}) \sum_{m=0}^{\infty} \frac{q^{m}}{1+q^{2m+1}}\right)\nonumber\\
&=q\left(- \nu(q)+ \frac{1}{(q^4;q^4)_{\infty}} \sum_{m=0}^{\infty} \frac{q^{m}}{1+q^{2m+1}}\right)\nonumber\\
&=q\left(- \nu(q)+ \frac{(q^4;q^4)_{\infty}}{(q^2;q^4)_{\infty}^2} \right)\nonumber\\
&=q(-\nu(q)+ (q^4;q^4)_{\infty} (-q^2;q^2)_{\infty}^2)\nonumber\\
&=q(-\nu(q)+(-q^2;q^2)^3_{\infty} (q^2;q^2)_{\infty})\nonumber\\
&=q^2 \omega(q^2),
\end{align}}
where the fourth step follows from \eqref{5}, and the sixth and seventh steps follow from \eqref{1psi1} and \eqref{ei} respectively. 
This proves \eqref{thm1} with $q$ replaced by $q^2$, and thus completes the proof. 
\endproof

We now give another proof of the above theorem. We begin with a lemma that is also used in the subsequent sections.

\begin{lemma}\label{lemma1}
We have
\begin{align}
\sum_{n=0}^{\infty} \frac{q^{\frac{n}{2}}}{1+q^{\frac{1}{2}+n}}&=\sum_{n=0}^{\infty} \frac{q^n}{1-q^{4n+1}}-q^2\sum_{n=0}^{\infty} \frac{q^{3n}}{1-q^{4n+3}}=\sum_{n=0}^{\infty} \frac{  (-1)^n q^{\frac{n}{2}}}{ 1- q^{\frac{1}{2}+n} }.  \label{lem1} 
\end{align}
\end{lemma}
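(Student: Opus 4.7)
The plan is to establish the two equalities separately; both rest on expanding a geometric series and then swapping the order of summation. Let me denote the three expressions (from left to right) by $S_{1},S_{2},S_{3}$.

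For $S_{1}=S_{3}$ I would expand $(1+q^{n+1/2})^{-1}=\sum_{m=0}^{\infty}(-1)^{m}q^{m(n+1/2)}$ inside $S_{1}$, interchange the order of summation (justified by absolute convergence for $|q|<1$), and then carry out the resulting inner geometric series in $n$. This produces $\sum_{m=0}^{\infty}(-1)^{m}q^{m/2}/(1-q^{m+1/2})=S_{3}$. This half is routine.

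For $S_{1}=S_{2}$ I would split $S_{1}$ according to the parity of $n$ and apply the identity $\tfrac{1}{1+x}=\tfrac{1-x}{1-x^{2}}$ to each piece. Setting $n=2k$ and $n=2k+1$, the two types of summands become
$$\frac{q^{k}-q^{3k+1/2}}{1-q^{4k+1}}\qquad\text{and}\qquad\frac{q^{k+1/2}-q^{3k+2}}{1-q^{4k+3}}$$
respectively. Collecting the integer-power contributions recovers exactly $S_{2}=\sum_{k}q^{k}/(1-q^{4k+1})-q^{2}\sum_{k}q^{3k}/(1-q^{4k+3})$, while the half-integer contributions combine into
$$q^{1/2}\biggl(\sum_{k=0}^{\infty}\frac{q^{k}}{1-q^{4k+3}}-\sum_{k=0}^{\infty}\frac{q^{3k}}{1-q^{4k+1}}\biggr).$$

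The main (and only nontrivial) step is to show that this bracket vanishes. I would expand each of the two sums as a double geometric series: the first becomes $\sum_{k,m\ge 0}q^{k(4m+1)+3m}$, and the second becomes $\sum_{k,m\ge 0}q^{k(4m+3)+m}$. Exchanging the dummy names $k\leftrightarrow m$ in the first turns its exponent $k(4m+1)+3m$ into $m(4k+1)+3k=k(4m+3)+m$, which matches the second term for term. This identifies the two sums and so closes the chain $S_{1}=S_{2}=S_{3}$. I do not foresee any obstacle beyond keeping the bookkeeping of even/odd indices straight.
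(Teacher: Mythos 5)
Your proof is correct. For the equality of the leftmost sum with the middle expression, your argument is essentially the paper's: split by parity of $n$, use $\tfrac{1}{1+x}=\tfrac{1-x}{1-x^2}$, separate integer from half-integer powers, and kill the half-integer bracket by the rearrangement $\sum_{k\ge 0}\tfrac{q^{3k}}{1-q^{4k+1}}=\sum_{k\ge 0}\tfrac{q^{k}}{1-q^{4k+3}}$ (your double-series relabeling $k\leftrightarrow m$ is exactly the paper's identity \eqref{lem4}). Where you diverge is the alternating sum: the paper never links it directly to the left-hand side, but instead repeats the same parity decomposition on $\sum_{n\ge 0}\tfrac{(-1)^nq^{n/2}}{1-q^{1/2+n}}$ (writing it as $\tfrac{(-1)^nq^{n/2}(1+q^{1/2+n})}{1-q^{1+2n}}$) and lands on the same middle expression, whereas you prove the first and third sums are equal directly by expanding $(1+q^{1/2+n})^{-1}$ geometrically and interchanging the order of summation. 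Your route for that half is a little more economical -- one Lambert-series interchange instead of a second parity computation -- while the paper's has the mild aesthetic advantage of treating both outer sums symmetrically through the common middle term; the absolute-convergence justification you invoke for the swap is immediate for $|q|<1$, so there is no gap.
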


\proof
For obtaining the first equality, note that
{\allowdisplaybreaks\begin{align*}
\sum_{n=0}^{\infty} \frac{ q^{\frac{n}{2}}}{ 1+ q^{\frac{1}{2}+n} }&=\sum_{n=0}^{\infty} \frac{ q^{\frac{n}{2}}(1-q^{\frac{1}{2}+n})}{1-q^{1+2n}}\\
&=\sum_{n=0}^{\infty} \frac{q^n(1-q^{\frac{1}{2}+2n})}{1-q^{4n+1}} + \sum_{n=0}^{\infty} \frac{q^{\frac{1}{2}+n} (1-q^{\frac{3}{2}+2n})}{1-q^{4n+3}}\\
&=\sum_{n=0}^{\infty} \frac{q^n}{1-q^{4n+1}} -\sum_{n=0}^{\infty} \frac{q^{3n+2}}{1-q^{4n+3}} - q^{\frac{1}{2}}\left(\sum_{n=0}^{\infty} \frac{q^{3n}}{1-q^{4n+1}} -\sum_{n=0}^{\infty} \frac{q^n}{1-q^{4n+3}}\right)\\
&=\sum_{n=0}^{\infty} \frac{q^n}{1-q^{4n+1}} -\sum_{n=0}^{\infty} \frac{q^{3n+2}}{1-q^{4n+3}},
\end{align*}}
since 
\begin{align}
\sum_{n=0}^{\infty} \frac{q^{3n}}{1-q^{4n+1}} =\sum_{n=0}^{\infty}q^{3n}\sum_{m=0}^{\infty}q^{(4n+1)m}=\sum_{m=0}^{\infty}q^{m}\sum_{n=0}^{\infty}q^{(4m+3)n}=\sum_{n=0}^{\infty} \frac{q^{n}}{1-q^{4n+3}}. \label{lem4}
\end{align}
For the second statement, we employ the same method to see that
{\allowdisplaybreaks\begin{align*}
\sum_{n=0}^{\infty} \frac{  (-1)^n q^{\frac{n}{2}}}{ 1- q^{\frac{1}{2}+n} }&=\sum_{n=0}^{\infty} \frac{(-1)^n q^{\frac{n}{2}}(1+q^{\frac{1}{2}+n})}{1-q^{1+2n}}\\
&=\sum_{n=0}^{\infty} \frac{q^n(1+q^{\frac{1}{2}+2n})}{1-q^{4n+1}} -\sum_{n=0}^{\infty} \frac{q^{\frac{1}{2}+n} (1+q^{\frac{3}{2}+2n})}{1-q^{4n+3}}\\
&=\sum_{n=0}^{\infty} \frac{q^n}{1-q^{4n+1}} -\sum_{n=0}^{\infty} \frac{q^{3n+2}}{1-q^{4n+3}} + q^{\frac{1}{2}}\left(\sum_{n=0}^{\infty} \frac{q^{3n}}{1-q^{4n+1}} -\sum_{n=0}^{\infty} \frac{q^n}{1-q^{4n+3}}\right)\\
&=\sum_{n=0}^{\infty} \frac{q^n}{1-q^{4n+1}} -\sum_{n=0}^{\infty} \frac{q^{3n+2}}{1-q^{4n+3}},
\end{align*}}
where the last equality follows from \eqref{lem4}.
\endproof

\textbf{Second proof of Theorem \ref{qomegaq}.} First,

\begin{align}
&\sum_{n=1}^{\infty} \frac{q^n}{(q^n;q)_{n+1} (q^{2n+2};q^2)_{\infty}} =
\frac{q}{(q^2;q^2)_{\infty} (1-q)} \sum_{n=0}^{\infty} \frac{q^{n} (q;q)_{n}}{(q^3;q^2)_{n}}, \label{15}
\end{align}
as can be seen from the third step in \eqref{string}.

We now set $a=-b=A^{-1}=q^{1/2}$ and let $B\to 0$ in \eqref{gea90_thm1} to obtain
{\allowdisplaybreaks\begin{align}
&\sum_{n=1}^{\infty} \frac{q^{n} (q;q)_{n}}{(q^3;q^2)_{n}} \notag \\
&=\frac{-q^{-1/2} (q;q)_{\infty}}{(q^3;q^2)_{\infty}} \sum_{m=0}^{\infty} (q^{1/2};q)_m (-q^{1/2})^m +(1-q^{1/2}) \sum_{m=0}^{\infty} \frac{(-q^{-1/2};q)_{m+1} q^{m/2}}{(-q^{1/2};q)_{m+1}} \notag \\
&=\frac{-q^{-1/2} (q;q)_{\infty}}{(q^3;q^2)_{\infty}} \sum_{m=0}^{\infty} (q^{1/2};q)_m (-1)^m q^{m/2} +q^{-1/2} (1-q) \sum_{m=0}^{\infty} \frac{q^{m/2}}{1+ q^{1/2+m}} \notag\\
&=\frac{-q^{-1/2} (q;q)_{\infty}}{(q^3;q^2)_{\infty}} \sum_{m=0}^{\infty} (q^{1/2};q)_m (-1)^m q^{m/2} +q^{-1/2} (1-q) \left( \sum_{n=0}^{\infty} \frac{q^n}{1-q^{4n+1}}-q^2\sum_{n=0}^{\infty} \frac{q^{3n}}{1-q^{4n+3}} \right),  \label{16}
\end{align}}
where the last equality follows from Lemma \ref{lemma1}.

We now use \eqref{xi}, later proved in Theorem \ref{nuqthm} of Section \ref{pdp}:
\begin{align}
\sum_{m=0}^{\infty} (q;q^2)_m (-q)^m =(-q^2;q^2)_{\infty} \psi(q^2) -q\, \omega(q^2)=\frac{1}{(q^4;q^4)_{\infty}} \psi^2(q^2)-q\; \omega(q^2), \label{21}
\end{align}
where \cite[p. 23, Equation (2.2.13)]{gea}
\begin{align}\label{rampsi}
\psi(q)=\sum_{n=0}^{\infty} q^{\binom{n+1}{2}}=\frac{(q^2;q^2)_{\infty}}{(q;q^2)_{\infty}}.
\end{align}
The last equality in \eqref{21} follows from \eqref{ei}. 
From \eqref{15}, \eqref{16},  and \eqref{21},   we obtain 
\begin{align}\label{finid}
\sum_{n=1}^{\infty} \frac{q^n}{(q^n;q)_{n+1} (q^{2n+2};q^2)_{\infty}}  
&=q \; \omega(q) -\frac{q^{1/2}}{(q^2;q^2)_{\infty}} \left( \psi^2(q)- \sum_{n=0}^{\infty} \frac{q^n}{1-q^{4n+1}}+q^2\sum_{n=0}^{\infty} \frac{q^{3n}}{1-q^{4n+3}} \right) \notag \\
&=q \; \omega(q). 
\end{align}
This completes the proof.\\

\noindent{\bf Remark.}  As a by-product of \eqref{finid}, we obtain the identity
\begin{align*}
\psi^2({q})=\frac{(q^2;q^2)_{\infty}^2}{(q;q^2)_{\infty}^2}=\sum_{n=0}^{\infty} \frac{q^n}{1-q^{4n+1}}-q^2\sum_{n=0}^{\infty} \frac{q^{3n}}{1-q^{4n+3}}.
\end{align*}

We now obtain a representation corresponding to Theorem \ref{qomegaq} for the generating function of partitions in which each even part is at most twice the smallest part. 
\begin{theorem}\label{rq}
With $R(q)$ defined in \eqref{rqdef}, we have
\begin{align*}
\sum_{n=1}^{\infty} \frac{q^n}{(q^n;q)_{n+1} (q^{2n+1};q^2)_{\infty}}=-\frac{1}{2} R(q)+\frac{1}{(q;q^2)_{\infty}}\sum_{n=0}^{\infty} \frac{q^n}{1+q^n}.
\end{align*}
\end{theorem}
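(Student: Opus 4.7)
The proof will closely follow the template of the second proof of Theorem \ref{qomegaq}. First I would rewrite the Pochhammers on the LHS via $(q^n;q)_{n+1} = (q;q)_{2n}/(q;q)_{n-1}$ and $(q^{2n+1};q^2)_\infty = (q;q^2)_\infty/(q;q^2)_n$, then apply $(q;q)_{2n} = (q;q^2)_n(q^2;q^2)_n$ and shift the summation index $n \mapsto n+1$ to obtain
\begin{equation*}
\text{LHS} = \frac{q}{(q;q^2)_\infty (1-q^2)} \sum_{n=0}^{\infty}\frac{q^n (q;q)_n}{(q^4;q^2)_n}.
\end{equation*}
The remaining task is to evaluate this $q$-series.

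For that, I would invoke the four-parameter identity \eqref{gea90_thm1} with $a=q$, $b=-q$, $B=q$, and $A\to 0$. This specialization is forced by the denominator match $(-aq;q)_n(-bq;q)_n = (-q^2;q)_n(q^2;q)_n = (q^4;q^2)_n$, while the numerator reduces to $(q;q)_n\cdot 1$ because $(-Abq;q)_n = (Aq^2;q)_n \to 1$ as $A\to 0$. The right-hand side of \eqref{gea90_thm1} then splits into two sums that I would handle separately.

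The first sum requires the $A\to 0$ limit of $(A^{-1};q)_m(Abq/a)^m$; comparing leading orders in $A$ shows this product tends to $q^{m(m+1)/2}$. With $(-B/a;q)_{m+1} = (-1;q)_{m+1} = 2(-q;q)_m$, the sum becomes exactly $\tfrac{1}{2}R(q)$ by \eqref{rqdef}. In the second sum, $(-ABq/a;q)_m$ and $(Abq/a;q)_{m+1}$ both tend to $1$, and the key manipulation is the factorization $(-q^{-1};q)_{m+1} = 2(1+q^{-1})(-q;q)_{m-1}$ for $m\geq 1$, together with the telescoping $(-q;q)_{m-1}/(-q;q)_m = 1/(1+q^m)$. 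After absorbing the $m=0$ term via $1/2 = q^0/(1+q^0)$, the second sum collapses to $((1-q^2)/q)\sum_{m=0}^{\infty} q^m/(1+q^m)$.

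Assembling the pieces and multiplying by the prefactor $q/[(q;q^2)_\infty (1-q^2)]$ yields
\begin{equation*}
\text{LHS} = -\frac{R(q)}{2(q;q^2)_\infty (1+q)(-q^2;q)_\infty} + \frac{1}{(q;q^2)_\infty}\sum_{m=0}^{\infty}\frac{q^m}{1+q^m}.
\end{equation*}
A final application of Euler's identity \eqref{ei} gives $(q;q^2)_\infty (1+q)(-q^2;q)_\infty = (q;q^2)_\infty (-q;q)_\infty = 1$, producing the claimed formula. The most delicate point is the $A\to 0$ limit in the first sum on the RHS of \eqref{gea90_thm1}, where $(A^{-1};q)_m$ diverges while $(Abq/a)^m \to 0$; carefully tracking the leading $A$-behavior is what produces the triangular-number exponent $q^{m(m+1)/2}$ and thereby the function $R(q)$.
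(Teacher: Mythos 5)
Your proof is correct, and it reaches the paper's key intermediate identity \eqref{12} by a different specialization of the master identity \eqref{gea90_thm1}. The paper first keeps $a$ general, setting $b=-a$, $A=a^{-1}$ and letting $B\to 0$, which yields \eqref{1} with the sum $\sum_{m\ge0}(a;q)_m(-q/a)^m$; converting that into the partial theta series $R(q)$ then requires an extra step, namely Heine's second transformation \eqref{11}, before specializing $a=q$. You instead take the dual limit, fixing $B=q$, $a=q$, $b=-q$ and letting $A\to 0$, so that $(A^{-1};q)_m(-Aq)^m\to q^{\binom{m+1}{2}}$ and $(-B/a;q)_{m+1}=(-1;q)_{m+1}=2(-q;q)_m$ produce $\tfrac12 R(q)$ immediately, while your handling of the second sum (the factorization of $(-q^{-1};q)_{m+1}$ and the quotient $(-q;q)_{m-1}/(-q;q)_m=1/(1+q^m)$) gives $q^{-1}(1-q^2)\sum_{m\ge0}q^m/(1+q^m)$, exactly as in \eqref{12}; the final assembly with the prefactor from \eqref{2} and Euler's identity \eqref{ei} is the same. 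Your route is shorter in that it bypasses the Heine transformation entirely, and it is in fact the base-$q$ analogue of the specialization the paper uses to derive \eqref{5}; what the paper's route buys is the identity \eqref{1} valid for general $a$, at the cost of the extra $_2\phi_1$ transformation. The only point needing a word of justification in your version is the termwise passage $A\to 0$ inside the first sum (the product $(A-1)(A-q)\cdots(A-q^{m-1})(bq/a)^m$ is dominated uniformly for small $A$, so the interchange is legitimate), which is the same kind of limit the paper itself takes elsewhere.
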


\begin{proof}
First we have
\begin{align}
\sum_{n=1}^{\infty} \frac{q^n}{(q^n;q)_{n+1} (q^{2n+1};q^2)_{\infty}}&=
\sum_{n=1}^{\infty} \frac{q^n(q;q)_{n-1}}{(q;q)_{2n} (q^{2n+1};q^2)_{\infty}}\notag \\
&=\frac{q}{(q;q^2)_{\infty} (1-q^2)} \sum_{n=0}^{\infty} \frac{q^{n} (q;q)_{n}}{(q^4;q^2)_{n}}. \label{2}
\end{align}
Set $b=-a$, $A=a^{-1}$ and let $B\to 0$ in \eqref{gea90_thm1} to obtain
\begin{align}
\sum_{n=0}^{\infty} \frac{q^{n} (q;q)_{n}}{(a^2q^2;q^2)_{n}}&=\frac{-a^{-1} (q;q)_{\infty}}{(a^2 q^2;q^2)_{\infty}} \sum_{m=0}^{\infty} (a;q)_m (-q/a)^m+(1-a) \sum_{m=0}^{\infty} \frac{(-a^{-1};q)_{m+1} a^{m}}{(-q/a;q)_{m+1}}\notag \\
&=\frac{-a^{-1} (q;q)_{\infty}}{(a^2q^2;q^2)_{\infty}} \sum_{m=0}^{\infty} (a;q)_m (-q/a)^m + a^{-1}(1-a^2)  \sum_{m=0}^{\infty} \frac{a^{m}}{1+q^{m+1}/a}. \label{1}
\end{align}
Note that
\begin{align*}
\sum_{m=0}^{\infty} (a;q)_m (-q/a)^m=\lim_{c\to 0} {_2}\phi_1 \left(\begin{matrix} a, q\\ c \end{matrix} ; q, -q/a\right).
\end{align*}
Heine's second transformation \cite[p.~241, Equation (III.2)]{gasper} of $_2 \phi_1$ gives
\begin{align}
\sum_{m=0}^{\infty} (a;q)_m (-q/a)^m&=\lim_{c\to 0} {_2}\phi_1 \left(\begin{matrix} a, q\\ c \end{matrix} ; q, -q/a\right) \notag \\
&=\lim_{c\to 0} \frac{(c/q;q)_{\infty} (-q^2/a;q)_{\infty}}{(c;q)_{\infty} (-q/a;q)_{\infty}} {_2} \phi_1 \left(\begin{matrix} -q^2/c, q\\ -q^2/a \end{matrix} ; q, c/q\right) \notag \\
&=\frac{(-q^2/a;q)_{\infty}}{(-q/a;q)_{\infty} } \sum_{n=0}^{\infty} \frac{q^{\binom{n+1}{2}}}{(-q^2/a;q)_n}. \label{11}
\end{align}
Substituting \eqref{11} into \eqref{1}, we see that
\begin{align*}
\sum_{n=0}^{\infty} \frac{q^{n} (q;q)_{n}}{(a^2q^2;q^2)_{n}} 
&=\frac{-a^{-1} (q;q)_{\infty} (-q^2/a;q)_{\infty}}{(a^2q^2;q^2)_{\infty} (-q/a;q)_{\infty}} \sum_{n=0}^{\infty} \frac{q^{\binom{n+1}{2}}}{(-q^2/a;q)_n} + a^{-1}(1-a^2) \sum_{m=0}^{\infty} \frac{a^m}{1+q^{m+1}/a},
\end{align*}
and when $a=q$, this becomes
\begin{align}
\sum_{n=0}^{\infty} \frac{q^{n} (q;q)_{n}}{(q^4;q^2)_{n}}=\frac{-q^{-1} (q;q)_{\infty}}{2 (q^4;q^2)_{\infty} }\sum_{n=0}^{\infty} \frac{q^{\binom{n+1}{2}}}{(-q;q)_n} + q^{-1} (1-q^2) \sum_{n=0}^{\infty} \frac{q^n}{1+q^{n}}.  \label{12}
\end{align}
Now substitute \eqref{12} into \eqref{2} to obtain
\begin{align*}
\sum_{n=1}^{\infty} \frac{q^n}{(q^n;q)_{n+1} (q^{2n+1};q^2)_{\infty}}&=-\frac{1}{2} \sum_{n=0}^{\infty} \frac{q^{\binom{n+1}{2}}}{(-q;q)_n} + \frac{1}{(q;q^2)_{\infty}} \sum_{n=0}^{\infty} \frac{q^n}{1+q^{n}}.
\end{align*}
This completes the proof.

\end{proof}

\subsection{Aforementioned partitions with unique smallest part}\label{unique}
In this section, we consider the aforementioned partitions with the only additional restriction being that the smallest part cannot repeat. 
\begin{theorem} \label{thm0.2}
The following is true:
\begin{align*}
\sum_{n=1}^{\infty} \frac{q^n}{(q^{n+1};q)_{n} (q^{2n+1};q^2)_{\infty}}=-1+(-q;q)_{\infty}.
\end{align*}
\end{theorem}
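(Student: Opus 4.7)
My approach has two main steps: first, collapse the three $q$-Pochhammer factors in the summand into a single clean expression, reducing the identity to an Euler-type evaluation; second, prove that evaluation by a specialization of the four-parameter identity \eqref{gea90_thm1}.

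For the reduction I intend to use the telescoping
$$
(q^{n+1};q)_n\,(q^{2n+1};q^2)_\infty\,(q^{2n+2};q^2)_\infty = (q^{n+1};q)_\infty = \frac{(q;q)_\infty}{(q;q)_n},
$$
together with $(q^{2n+2};q^2)_\infty = (q^2;q^2)_\infty/(q^2;q^2)_n$, the factorization $(q^2;q^2)_n = (q;q)_n(-q;q)_n$, and Euler's identity \eqref{ei}. These combine to rewrite the summand as $(-q;q)_\infty\,q^n/(-q;q)_n$, so that the left-hand side of the theorem becomes
$$
(-q;q)_\infty \sum_{n=1}^\infty \frac{q^n}{(-q;q)_n}.
$$
The theorem is thus reduced to showing
$$
\sum_{n=0}^\infty \frac{q^n}{(-q;q)_n} = 2 - \frac{1}{(-q;q)_\infty}.
$$

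To prove this last identity I plan to set $a=1$, $b=0$, and $B=0$ in \eqref{gea90_thm1} (with $A$ an arbitrary nonzero value). With these choices the left-hand side of \eqref{gea90_thm1} is precisely $\sum_n q^n/(-q;q)_n$, while its two $m$-sums on the right-hand side each collapse to just their $m=0$ contributions, because the factors $(Abq/a)^m$ and $(-b)^m$ vanish for every $m\geq 1$. The $m=0$ term of the first sum evaluates the prefactor to $-1/(-q;q)_\infty$, and the $m=0$ term of the second sum gives $(1+b)(-a^{-1};q)_1 = 2$. Combining, $\sum_{n=1}^\infty q^n/(-q;q)_n = 1 - 1/(-q;q)_\infty$, and therefore the left-hand side of the theorem equals $(-q;q)_\infty(1 - 1/(-q;q)_\infty) = (-q;q)_\infty - 1$, as claimed.

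\textbf{Main obstacle.} The argument is essentially bookkeeping; no machinery beyond \eqref{ei} and \eqref{gea90_thm1} is required. The only real subtlety is tracking the lower index correctly: the left-hand side of the theorem starts at $n=1$, and the constant $-1$ on the right-hand side comes out exactly from extracting the $n=0$ term of $\sum_{n\geq 0} q^n/(-q;q)_n$. A self-contained alternative for the evaluation step would be to write $1/(-q;q)_n = (-q^{n+1};q)_\infty/(-q;q)_\infty$, expand $(-q^{n+1};q)_\infty = \sum_k q^{(n+1)k+\binom{k}{2}}/(q;q)_k$ (the $a\to\infty$ limit of \eqref{qbin}), interchange the order of summation, sum the resulting geometric series in $n$, and finish with Euler's identity $\sum_{m\geq 0} q^{\binom{m}{2}}/(q;q)_m = (-1;q)_\infty = 2(-q;q)_\infty$; in this route the same $m=0$ boundary term is what produces the $-1$.
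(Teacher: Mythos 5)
Your proposal is correct and follows essentially the same route as the paper: the same Pochhammer reduction of the summand (the paper writes it as $\frac{1}{(q;q^2)_\infty}\sum_{n}\frac{q^n(q;q)_n}{(q^2;q^2)_n}$, which is exactly your $(-q;q)_\infty\sum_n \frac{q^n}{(-q;q)_n}$), followed by a specialization of \eqref{gea90_thm1}. The only difference is cosmetic: the paper uses $a=-b=A=-1$, $B\to 0$ and then needs the combinatorial identity \eqref{seplarge} to evaluate $\sum_m(-1;q)_mq^m$, whereas your choice $a=1$, $b=B=0$ collapses both sums on the right of \eqref{gea90_thm1} to their $m=0$ terms and yields $\sum_{n\ge 0}q^n/(-q;q)_n=2-1/(-q;q)_\infty$ directly.
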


\begin{proof}
First note that
\begin{align}
\sum_{n=0}^{\infty} \frac{q^n}{(q^{n+1};q)_{n} (q^{2n+1};q^2)_{\infty}}&=
\frac{1}{(q;q^2)_{\infty} } \sum_{n=0}^{\infty} \frac{q^{n} (q;q)_{n}}{(q^2;q^2)_{n}}. \label{13}
\end{align}
Set $a=-b= A=-1$ and let $B\to 0$ in \eqref{gea90_thm1} to see that
\begin{align}
\sum_{n=0}^{\infty} \frac{q^{n} (q;q)_{n}}{(q^2;q^2)_{n}}&=\frac{ (q;q)_{\infty}}{(q^2;q^2)_{\infty}} \sum_{m=0}^{\infty} (-1;q)_m q^m. \label{14}
\end{align}
From \eqref{13} and \eqref{14},
\begin{align*}
\sum_{n=0}^{\infty} \frac{q^n}{(q^{n+1};q)_{n} (q^{2n+1};q^2)_{\infty}}&=
\sum_{m=0}^{\infty} (-1;q)_m q^m\\
&=-1+2\big(1+\sum_{m=1}^{\infty} (-q;q)_{m-1} q^{m}\big)\\
&=-1+2(-q;q)_{\infty},
\end{align*}
where the last step is valid since
\begin{equation}\label{seplarge}
1+\sum_{m=1}^{\infty} (-q;q)_{m-1} q^{m}=(-q;q)_{\infty},
\end{equation}
which in turn follows from the fact that both sides represent the generating function for partitions into distinct parts, with the left one doing so by separating out the largest part.

Hence,
\begin{align*}
\sum_{n=1}^{\infty} \frac{q^n}{(q^{n+1};q)_{n} (q^{2n+1};q^2)_{\infty}}&=\sum_{n=0}^{\infty} \frac{q^n}{(q^{n+1};q)_{n} (q^{2n+1};q^2)_{\infty}}-\frac{1}{(q;q^2)_{\infty}}\\
&=\sum_{n=0}^{\infty} \frac{q^n}{(q^{n+1};q)_{n} (q^{2n+1};q^2)_{\infty}} -(-q;q)_{\infty}\\
&=-1+(-q;q)_{\infty}.
\end{align*}%
\end{proof}

\textbf{A combinatorial proof of Theorem~\ref{thm0.2}.} Theorem~\ref{thm0.2} yields the following partition theorem. We provide a combinatorial proof. 
\begin{theorem} \label{thm0.5}
Let $n$ be a positive integer. Then 
the number of partitions of $n$ with unique smallest part in which each even does not exceed twice the smallest part equals the number of partitions of $n$ into distinct parts. 
\end{theorem}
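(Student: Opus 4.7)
My plan is to construct an explicit bijection $\phi : A_n \to D_n$, where $A_n$ denotes the set of partitions of $n$ with unique smallest part such that every even part is at most twice the smallest, and $D_n$ denotes the set of nonempty partitions of $n$ into distinct parts. (The equinumerosity is already forced by Theorem~\ref{thm0.2}; the point is to give a direct combinatorial correspondence.) The map I have in mind is a Glaisher-type doubling: repeated parts in a type-$A$ partition get split according to the binary expansion of their multiplicities.

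Given $\pi \in A_n$ with unique smallest part $s$, every other part of $\pi$ lies in the small range $S := \{s+1,\ldots,2s\}$ or in the large-odd range $L := \{2s+1,\,2s+3,\ldots\}$, with repetition allowed. I will define $\phi(\pi)$ by keeping $s$ as a part and, for each $v \in S\cup L$ appearing with multiplicity $c_v \geq 1$ in $\pi$, writing $c_v = 2^{a_1}+\cdots+2^{a_r}$ in binary and replacing the $c_v$ copies of $v$ by the parts $v\cdot 2^{a_1},\ldots,v\cdot 2^{a_r}$. To show $\phi(\pi)\in D_n$, the crux is that the odd-part map $v\mapsto m$ (where $v = 2^{\alpha} m$ with $m$ odd) is injective on $S\cup L$: within $S$, two distinct elements $v<v'$ sharing an odd part would force $v' \geq 2v \geq 2(s+1)>2s$, a contradiction; and odd parts arising from $S$ all lie in $\{1,\ldots,2s-1\}$ while elements of $L$ are themselves odd and $\geq 2s+1$, so the two categories cannot overlap either. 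Consequently $v\cdot 2^a \neq v'\cdot 2^b$ whenever $v\neq v'$; for fixed $v$ the exponents are distinct by construction; and every new part strictly exceeds $s$.

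For the inverse $\psi : D_n \to A_n$, given $\mu\in D_n$ with smallest part $s$, I factor each larger part $p$ as $p = 2^a m$ with $m$ odd and assign it a unique base $v\in S\cup L$ and exponent $e\geq 0$ with $p = v\cdot 2^e$: if $m\in L$, put $v:=m,\ e:=a$; otherwise $m\leq 2s-1$, and the standard dyadic fact that $(x,2x]$ contains a unique power of two for every $x>0$ (applied to $x=s/m$) yields a unique $b\geq 0$ with $2^b m \in (s,2s]$, so take $v := 2^b m \in S$ and $e:=a-b$. The condition $e\geq 0$ follows by separating $p\leq 2s$ (which forces $p=v$, $e=0$) from $p>2s$ (which gives $a\geq b+1$). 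Two distinct parts of $\mu$ assigned to the same base $v$ must have distinct exponents, so one can form $c_v := \sum 2^{e_i}$ and output $s$ together with $c_v$ copies of each base $v$; the result lies in $A_n$ and is manifestly inverse to $\phi$.

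I expect the main obstacle to be the odd-part injectivity on $S\cup L$ together with the existence/uniqueness of the power of two in $(s/m,\,2s/m]$; both are elementary but demand a careful case split between the small and large ranges and between $m\in L$ and $m\leq 2s-1$. Once those are in place, sum preservation and the identities $\phi\circ\psi=\mathrm{id}$, $\psi\circ\phi=\mathrm{id}$ are mechanical.
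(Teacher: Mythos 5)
Your proposal is correct and is essentially the paper's own argument: the paper also proves this by a Glaisher-type bijection, writing the multiplicities of the parts above the smallest part $k$ in binary, observing that $2^i n_1 = 2^j n_2$ forces $i=j$, $n_1=n_2$ within the allowed range, and then showing every integer greater than $k$ has a unique representation $2^i n$ with $n>k$ odd or $k<n\le 2k$ even. Your treatment of the inverse via the unique power of two in $(s/m,\,2s/m]$ is just a uniform packaging of the paper's case analysis (done there for $k$ even, with the odd case omitted), so the two proofs coincide in substance.
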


\proof
For any fixed $k$, let $A_k$ be the set of partitions $\lambda$ in which $k$ is the unique smallest part and even parts are $\le 2k$. In other words,
\begin{align*}
A_k=\{ \lambda =(k, (k+1)^{f_{k+1}},(k+2)^{f_{k+2}}, \ldots)  \; |\;  f_{n}=0 \text{ if $n$ is even and greater than $2k$} \}.
\end{align*}
We now write $f_n$ as a binary representation:
\begin{align*}
f_n=\sum_{i=0}^{\infty} a_{n,i} 2^i,
\end{align*} 
where $a_{n,i}$ is either $0$ or $1$.  Note that this binary representation is unique. Also, for $n_1, n_2>k$ with the constraint that they do not exceed $2k$ if they are even,  we have 
\begin{align*}
2^in_1=2^j n_2  \quad \text{if and only if } \quad  i=j, n_1=n_2.
\end{align*}
The necessity is trivial. The sufficiency is clear as well for $n_1\equiv n_2 \equiv 1 \pmod{2}$. We now suppose that $n_1=2^{r_1}(2a+1)$ and $n_2=2^{r_2}(2b+1)$ with $r_1\ge r_2$, and also suppose that 
\begin{align*}
2^{i+r_1}(2a+1)=2^{j+r_2}(2b+1).
\end{align*} 
Then $i+r_1=j+r_2$ and $a=b$. If $r_1>r_2$, then  since $n_1=2^{r_1} (2a+1) \le 2k$,
\begin{align*}
n_2=2^{r_2}(2b+1)\le 2^{r_1-1}(2a+1)  \le k, 
\end{align*}
which is a contradiction.  
Thus we can say that $A_k$ is the set of partitions into distinct parts of the form $2^i n$ with $i\ge 0$, $n>k$ for $n$ odd and $k<n\le 2k$ for $n$ even. 

We now prove the theorem by showing that any integer $>k$ can be uniquely written as an integer of the form $2^i n$,  $i\ge 0$, $n>k$ for $n$ odd, and $k< n\le 2k$ for $n$ even.  

Let $k$ be even, i.e., $k=2m$ for some $m\ge 1$.  For an integer $N>k$, we now write it as
\begin{align*}
N=2^i M,
\end{align*}
where $M$ is odd. If $M>2m$, then
\begin{align*}
N=2^i \big (2m+(M-2m)\big),
\end{align*}
which is of the desired form. If $M<2m$, then since $N=2^i M> 2m$,  there exists a unique $j$ such that $2m< 2^j M \le 4m$, so we write
\begin{align*}
N=2^{i-j}( 2^jM),
\end{align*}
as is desired. 

We can prove the result for odd $k$ in a similar way, and hence the proof is omitted.
 \endproof
 
\noindent{\bf Remark.} Theorem~\ref{thm0.2} can be proved as follows without using \eqref{gea90_thm1}.
{\allowdisplaybreaks\begin{align*}
\sum_{n=1}^{\infty} \frac{q^n}{(q^{n+1};q)_{n} (q^{2n+1};q^2)_{\infty}}
&= \sum_{n=1}^{\infty} \frac{q^{n} (q^{2n+2};q^2)_{\infty}}{(q^{n+1};q)_{n} (q^{2n+1};q)_{\infty}}\\
&= \sum_{n=1}^{\infty} \frac{q^{n} (q^{2n+2};q^2)_{n} (-q^{2n+1};q)_{\infty}}{(q^{n+1};q)_{n}}\\
&=\sum_{n=1}^{\infty} q^{n} (-q^{n+1};q)_{n} (-q^{2n+1};q)_{\infty} \\
&=\sum_{n=1}^{\infty} q^n (-q^{n+1};q)_{\infty}\\
&=-1+(-q;q)_{\infty},
\end{align*}}
where the last equality holds because if we interpret the $n$ in the sum $\sum_{n=1}^{\infty} q^n (-q^{n+1};q)_{\infty}$ as denoting the smallest part in a partition, we obtain partitions into distinct parts.

We now give an analogue of Theorem \ref{thm0.2} for partitions with unique smallest part and all even parts at most twice the smallest part.
\begin{theorem}
We have
\begin{align*}
\sum_{n=1}^{\infty} \frac{q^n}{(q^{n+1};q)_{n} (q^{2n+2};q^2)_{\infty}}&=  q^2\sum_{m=0}^{\infty} \frac{ q^{3m}}{(q;q^2)_{m+1}}  + \frac{q}{(q^2;q^2)_{\infty}}.
\end{align*}
\end{theorem}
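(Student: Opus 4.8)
The plan is to treat this as the unique–smallest–part analogue of Theorem~\ref{qomegaq} and to follow the template of its second proof. First I would put the left-hand side into hypergeometric form. Using $(q^{n+1};q)_n=(q;q)_{2n}/(q;q)_n$, $(q^{2n+2};q^2)_{\infty}=(q^2;q^2)_{\infty}/(q^2;q^2)_n$ and the splitting $(q;q)_{2n}=(q;q^2)_n(q^2;q^2)_n$, each summand collapses to $q^n(q;q)_n/\big((q;q^2)_n(q^2;q^2)_{\infty}\big)$, so that after isolating the $n=0$ discrepancy and shifting the index,
\[
\sum_{n=1}^{\infty}\frac{q^n}{(q^{n+1};q)_{n}(q^{2n+2};q^2)_{\infty}}=\frac{q}{(q^2;q^2)_{\infty}}\sum_{n=0}^{\infty}\frac{q^n(q^2;q)_n}{(q^3;q^2)_n}.
\]
This is exactly the shape met in the second proof of Theorem~\ref{qomegaq} (the denominator is again $(q^3;q^2)_n$), the only change being the numerator $(q^2;q)_n$ in place of $(q;q)_n$.

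Next I would evaluate $\sum_{n\ge0}q^n(q^2;q)_n/(q^3;q^2)_n$ through the four–parameter identity \eqref{gea90_thm1}. Taking $a=q^{1/2}$, $b=-q^{1/2}$ forces $(-aq;q)_n(-bq;q)_n=(q^3;q^2)_n$, and choosing $B=q^2$ with $A\to0$ makes $(B;q)_n=(q^2;q)_n$ while $(-Abq;q)_n\to1$; since $|a|=|b|=|q|^{1/2}<1$ the two series on the right of \eqref{gea90_thm1} converge. After the limit $A\to0$ the prefactor simplifies, by elementary Pochhammer manipulations (Euler's identity \eqref{ei}), to a multiple of $(q^2;q^2)_{\infty}$, and one is left with a partial–theta sum $\sum_m q^{\binom{m+1}{2}}/(-q^{3/2};q)_{m+1}$ together with a rational sum.

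I would dispatch the rational (``second'') sum first. Writing its ratio of Pochhammer symbols as $(1+q^{-1/2})(1+q^{1/2})/\big((1+q^{m+1/2})(1+q^{m+3/2})\big)$ and using the partial fraction $\tfrac{1}{(1+q^{m+1/2})(1+q^{m+3/2})}=\tfrac{1}{1-q}\big(\tfrac{1}{1+q^{m+1/2}}-\tfrac{q}{1+q^{m+3/2}}\big)$, the sum telescopes onto $\sum_{m\ge0}q^{m/2}/(1+q^{m+1/2})$, which by Lemma~\ref{lemma1} and the by–product identity recorded in the Remark after \eqref{finid} equals $\psi^2(q)$. This contributes the clean closed form $1+(q^{-1/2}-q^{1/2})\psi^2(q)$.

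The remaining work—and the main obstacle—is the partial–theta sum. Here I would apply Heine's transformation in the form \eqref{11} to recast $\sum_m q^{\binom{m+1}{2}}/(-q^{3/2};q)_{m+1}$ as the linear–exponent series $\sum_m(q^{-1/2};q)_m(-q^{3/2})^m$, and then identify it, together with the $\psi^2(q)$ term produced above, with $q^2\sum_m q^{3m}/(q;q^2)_{m+1}+\tfrac{q}{(q^2;q^2)_{\infty}}$; the partial–theta sums that arise are of the same type as those evaluated in \eqref{21} (proved via Theorem~\ref{nuqthm}). The delicate point is the bookkeeping: the parameters $a,b=\pm q^{1/2}$ introduce half–integer powers of $q$ in \emph{both} residual sums, and one must verify that every $q^{1/2}$–power cancels between them, leaving only the stated integral–power expression. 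Equivalently, carrying the whole argument out with $q$ replaced by $q^2$ throughout—mirroring the first proof of Theorem~\ref{qomegaq}—keeps all exponents integral (the partial theta becomes the transparent cousin $\sum_m(q;q^2)_m(-q^3)^m$ of \eqref{21}) and turns this cancellation into routine algebra.
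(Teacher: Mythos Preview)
Your framework matches the paper's proof closely: both rewrite the left side as $\frac{q}{(q^2;q^2)_\infty}\sum_{n\ge0}\frac{(q^2;q)_nq^n}{(q^3;q^2)_n}$ (the paper's \eqref{20} after a shift), apply \eqref{gea90_thm1} with $B=q^2$, $A\to0$ and $a=-b=\pm q^{1/2}$, and reduce the rational piece via partial fractions and Lemma~\ref{lemma1}. Your sign choice $a=q^{1/2}$ versus the paper's $a=-q^{1/2}$ is immaterial: it swaps $(q^{3/2};q)_{m+1}\leftrightarrow(-q^{3/2};q)_{m+1}$ in the partial-theta term and $(1-q^{\cdot})\leftrightarrow(1+q^{\cdot})$ in the rational term, and the two versions of the rational sum are covered by the two halves of Lemma~\ref{lemma1}. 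Your evaluation of the rational sum as $1+(q^{-1/2}-q^{1/2})\psi^2(q)$ is correct.

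The gap is in your treatment of the partial-theta sum. The paper does \emph{not} detour through Heine \eqref{11}. Instead it expands $\sum_n q^{\binom{n+1}{2}}/(q^{3/2};q)_{n+1}$ directly: apply \eqref{finpoc1} to the denominator, sum over $n$ with \eqref{baicor}, and then separate even and odd $j$. The even-$j$ part closes via \eqref{euleri} into the product $\frac{(-q;q)_\infty(q^2;q^2)_\infty}{(q^3;q^2)_\infty}$, and the odd-$j$ part is \emph{literally} $q^{3/2}\sum_j q^{3j}/(q;q^2)_{j+1}$---the target series falls out with no further identification needed (this is the paper's \eqref{18}). The same expansion works verbatim for your $(-q^{3/2};q)_{m+1}$ variant. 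Your Heine step is itself correct, giving $\sum_m(q^{-1/2};q)_m(-q^{3/2})^m$, or with $q\to q^2$, $\sum_m(q^{-1};q^2)_m(-q^3)^m$ (note the $q^{-1}$, not the $q$ you wrote). But this is not the sum evaluated in \eqref{21}: that identity handles $\sum_m(q;q^2)_m(-q)^m$, with a different base point and a different argument, and its proof via $T_1,T_2$ in \eqref{viii}--\eqref{x} does not transfer. Invoking ``sums of the same type'' is not enough; closing your route would require an independent evaluation of $\sum_k(q;q^2)_k(-q^3)^k$ in terms of $\sum_m q^{3m}/(q;q^2)_{m+1}$, which is of comparable difficulty to the theorem itself. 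The paper's direct expansion sidesteps this entirely.
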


\begin{proof}
Note that
\begin{align}
&\sum_{n=1}^{\infty} \frac{q^{n}}{(q^{n+1};q)_{n} (q^{2n+2};q^2)_{\infty}} 
=\frac{1}{(q^2;q^2)_{\infty} } \sum_{n=1}^{\infty} \frac{(q;q)_{n} q^n}{(q;q^2)_{n}}. \label{20}
\end{align}
Set $a=-b=-q^{1/2}$, $B=q^2$, and let $A\to 0$ in \eqref{gea90_thm1} to see that
\begin{align*}
\sum_{n=0}^{\infty} \frac{(q^2;q)_{n} q^n}{(q^3;q^2)_n}&=\frac{q^{-\frac{1}{2}} (q^2;q)_{\infty}}{(q^3;q^2)_{\infty}} \sum_{m=0}^{\infty} \frac{q^{\binom{m+1}{2}}}{(q^{\frac{3}{2}};q)_{m+1}}  +(1+ q^{\frac{1}{2}}) \sum_{m=0}^{\infty} \frac{(q^{-\frac{1}{2}};q)_{m+1} (-1)^m q^{\frac{m}{2}}}{(q^{\frac{3}{2}};q)_{m+1}} \notag\\
&=\frac{q^{-\frac{1}{2}} (q;q)_{\infty}}{(q;q^2)_{\infty}} \sum_{m=0}^{\infty} \frac{q^{\binom{m+1}{2}}}{(q^{\frac{3}{2}};q)_{m+1}} + (1-q) (1-q^{-\frac{1}{2}}) \sum_{m=0}^{\infty} \frac{  (-1)^m q^{\frac{m}{2}}}{(1-q^{\frac{1}{2}+m})(1- q^{\frac{3}{2}+m}) }.
\end{align*}
Multiplying both sides by $q$, we obtain
\begin{align}
\sum_{n=1}^{\infty} \frac{(q;q)_{n} q^n}{(q;q^2)_n}&=
\frac{q^{\frac{1}{2}} (q;q)_{\infty}}{(q;q^2)_{\infty}} \sum_{m=0}^{\infty} \frac{q^{\binom{m+1}{2}}}{(q^{\frac{3}{2}};q)_{m+1}} - q^{\frac{1}{2}} (1-q^{\frac{1}{2}}) (1-q) \sum_{m=0}^{\infty} \frac{ (-1)^m q^{\frac{m}{2}}}{(1-q^{\frac{1}{2}+m})(1- q^{\frac{3}{2}+m}) }.   \label{17}
\end{align}
Also, using \eqref{finpoc1} in the first step below, then \eqref{gst} with $a=q^{j+1}$ and $b=0$ in the third, and then \eqref{euleri} in the fifth, we see that
\begin{align}
\sum_{n=0}^{\infty} \frac{q^{\binom{n+1}{2}}}{(q^{\frac{3}{2}};q)_{n+1}} &=\sum_{n=0}^{\infty} {q^{\binom{n+1}{2}}}\sum_{j=0}^{\infty} \left[\begin{matrix} n+j\\ j\end{matrix} \right] q^{\frac{3}{2}j }\notag \\
&=\sum_{j=0}^{\infty} q^{\frac{3}{2}j } \sum_{n=0}^{\infty} \frac{(q^{j+1};q)_n}{(q;q)_n} q^{\binom{n+1}{2}} \notag\\
&=\sum_{j=0}^{\infty} q^{\frac{3}{2}j } (-q;q)_{\infty} (q^{j+2};q^2)_{\infty} \notag\\
&=(-q;q)_{\infty} (q^2;q^2)_{\infty} \sum_{j=0}^{\infty} \frac{q^{3j}}{(q^2;q^2)_{j}} + q^{\frac{3}{2}}(-q;q)_{\infty} (q^3;q^2)_{\infty} \sum_{j=0}^{\infty} \frac{q^{3j}}{(q^3;q^2)_{j}} \notag\\
&=\frac{(-q;q)_{\infty} (q^2;q^2)_{\infty} }{(q^3;q^2)_{\infty}} +q^{\frac{3}{2}} (-q;q)_{\infty} (q;q^2)_{\infty} \sum_{j=0}^{\infty} \frac{q^{3j}}{(q;q^2)_{j+1}} \notag \\
&=\frac{(-q;q)_{\infty} (q^2;q^2)_{\infty} }{(q^3;q^2)_{\infty}} +q^{\frac{3}{2}}\sum_{j=0}^{\infty} \frac{q^{3j}}{(q;q^2)_{j+1}}, \label{18}
\end{align}
and 
{\allowdisplaybreaks\begin{align}
(1-q) \sum_{m=0}^{\infty} \frac{ (-1)^m q^{\frac{m}{2}}}{(1- q^{\frac{1}{2}+m})(1-q^{\frac{3}{2}+m}) } &=
\sum_{m=0}^{\infty} \frac{(-1)^m q^{\frac{m}{2}}}{1-q^{\frac{1}{2}+m}} +\sum_{m=0}^{\infty} \frac{(-1)^{m+1} q^{1+\frac{m}{2}}}{1-q^{\frac{3}{2}+m}} \notag \\
&=(1+q^{\frac{1}{2}})\sum_{m=0}^{\infty} \frac{(-1)^m q^{\frac{m}{2}}}{1-q^{\frac{1}{2}+m}} -\frac{q^{\frac{1}{2}}}{1-q^{\frac{1}{2}}}  \notag\\
&= (1+q^{\frac{1}{2}}) \left( \sum_{n=0}^{\infty} \frac{q^n}{1-q^{4n+1}} - q^2\sum_{n=0}^{\infty} \frac{q^{3n}}{1-q^{4n+3}} \right) -\frac{q^{\frac{1}{2}}}{1-q^{\frac{1}{2}}},  \label{19}
\end{align}}
where the last step follows from \eqref{lem1}.  
Thus, by \eqref{17}, \eqref{18}, \eqref{19}, and \eqref{20},  we obtain
{\allowdisplaybreaks\begin{align*}
&\sum_{n=1}^{\infty} \frac{q^{n}}{(q^{n+1};q)_{n} (q^{2n+2};q^2)_{\infty}} \notag \\
&=\frac{q^{\frac{1}{2}}  (q^2;q^2)_{\infty}}{(q;q^2)_{\infty} (q^3;q^2)_{\infty} } + q^{2} \sum_{j=0}^{\infty} \frac{ q^{3j}}{(q;q^2)_{j+1}} - \frac{q^{\frac{1}{2}}(1-q)}{(q^2;q^2)_{\infty} } \left( \sum_{n=0}^{\infty} \frac{q^n}{1-q^{4n+1}} - q^2\sum_{n=0}^{\infty} \frac{q^{3n}}{1-q^{4n+3}} \right) +\frac{q}{(q^2;q^2)_{\infty}}
\\
&=q^{2} \sum_{j=0}^{\infty} \frac{ q^{3j}}{(q;q^2)_{j+1}}  +\frac{q}{(q^2;q^2)_{\infty}} +\frac{q^{\frac{1}{2}}(1-q)}{(q^2;q^2)_{\infty}} \left(  \frac{(q^2;q^2)^2_{\infty}}{(q;q^2)_{\infty}^2}- \sum_{n=0}^{\infty} \frac{q^n}{1-q^{4n+1}} + q^2\sum_{n=0}^{\infty} \frac{q^{3n}}{1-q^{4n+3}} \right)\\
&=q^{2} \sum_{j=0}^{\infty} \frac{ q^{3j}}{(q;q^2)_{j+1}}  +\frac{q}{(q^2;q^2)_{\infty}},
\end{align*}}
where we used \eqref{lem1}, and \eqref{1psi1} with $q$ replaced by $q^{1/2}$ in the last step.
\end{proof}

\section{Partitions with distinct parts}\label{pdp}
In the theorem below, we show that the generating function for partitions into distinct parts where each odd is less than twice the smallest part is the third order mock theta function $\nu(-q)$.
\begin{theorem}\label{nuqthm}
Let $\psi(q)$ be defined in \eqref{rampsi}, and $\omega(q)$ and $\nu(q)$ be defined in \eqref{omegaq} and \eqref{nuq} respectively. Then,
\begin{align}
\sum_{n=0}^{\infty} q^n (-q^{n+1};q)_n (-q^{2n+2};q^2)_{\infty}&= q \; \omega(q^2) +(-q^2;q^2)_{\infty} \psi(q^2)  \label{thm1.1}\\
&=\nu(-q). \notag
\end{align}

\end{theorem}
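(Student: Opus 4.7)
The theorem states two equalities, which I would prove separately.

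For the second equality $q\omega(q^2) + (-q^2;q^2)_\infty \psi(q^2) = \nu(-q)$, I would start from Fine's identity $\nu(q) + q\omega(q^2) = (-q^2;q^2)_\infty^3 (q^2;q^2)_\infty$ recalled at the start of the first proof of Theorem~\ref{qomegaq}. Substituting $q \mapsto -q$, under which $\omega(q^2)$, $(-q^2;q^2)_\infty$, and $(q^2;q^2)_\infty$ are all invariant, yields $\nu(-q) = q\omega(q^2) + (-q^2;q^2)_\infty^3 (q^2;q^2)_\infty$. The claim then reduces to the product identity $\psi(q^2) = (-q^2;q^2)_\infty^2 (q^2;q^2)_\infty$, which follows immediately from the product formula \eqref{rampsi} and Euler's identity \eqref{ei} (both sides simplify to $(q^4;q^4)_\infty/(q^2;q^4)_\infty$).

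For the first equality, I would first simplify the summand using the factorization $(-q;q)_{2n} = (-q;q^2)_n (-q^2;q^2)_n$, which gives $(-q^{n+1};q)_n = (-q;q)_{2n}/(-q;q)_n = (-q;q^2)_n(-q^2;q^2)_n/(-q;q)_n$, together with $(-q^{2n+2};q^2)_\infty = (-q^2;q^2)_\infty/(-q^2;q^2)_n$. The LHS collapses to
\[
\sum_{n=0}^\infty q^n (-q^{n+1};q)_n (-q^{2n+2};q^2)_\infty = (-q^2;q^2)_\infty \sum_{n=0}^\infty \frac{q^n (-q;q^2)_n}{(-q;q)_n}.
\]
I would then apply the four-parameter identity \eqref{gea90_thm1} with $\pm q^{1/2}$-type parameter choices mirroring the second proof of Theorem~\ref{qomegaq} (for instance, $a = -b = q^{1/2}$ together with suitable limits in $A$ and $B$ so that the base-$q$ Pochhammer $(-aq;q)_n = (-q;q)_n$ sits in the denominator while $(B;q)_n$ and $(-Abq;q)_n$ conspire to recreate $(-q;q^2)_n$ in the numerator). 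The right-hand side of \eqref{gea90_thm1} then splits into two pieces: one that after shifting is recognizable as $q\omega(q^2)$ via the definition \eqref{omegaq}, and a residual half-integer-power Lambert series. Lemma~\ref{lemma1} reorganizes this Lambert series into $\sum q^n/(1-q^{4n+1}) - q^2 \sum q^{3n}/(1-q^{4n+3})$, which by the $_1\psi_1$ summation \eqref{1psi1} equals $\psi^2(q^2) = (q^2;q^2)_\infty^2/(q;q^2)_\infty^2$. Combining with the prefactor $(-q^2;q^2)_\infty$ and applying Euler's identity \eqref{ei} produces $(-q^2;q^2)_\infty \psi(q^2)$, and we obtain the claimed decomposition $q\omega(q^2) + (-q^2;q^2)_\infty \psi(q^2)$.

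The main obstacle is the correct choice of parameters in \eqref{gea90_thm1}. The reduced sum mixes bases $q$ (in the denominator) and $q^2$ (in the numerator), so the specialization is forced into $q^{1/2}$ territory, producing the half-integer-exponent series that Lemma~\ref{lemma1} was tailor-made to handle, and a careful bookkeeping of the $\omega$-piece separating from the $\psi^2$-piece is required. As a cleaner backup route, the identity $\sum_{m=0}^\infty (-q;q^2)_m q^m = \nu(-q)$ can be obtained in one step by inserting Cauchy's $q$-binomial expansion $(-q;q^2)_m = \sum_k \left[\begin{matrix} m \\ k \end{matrix}\right]_{q^2} q^{k^2}$, interchanging the order of summation, and applying \eqref{finpoc1} to the inner sum; the remaining task would then be to match the LHS directly to $\sum_m (-q;q^2)_m q^m$ via a single application of \eqref{gea90_thm1} or Heine's transformation.
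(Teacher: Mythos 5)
Your reduction of the left-hand side to $(-q^2;q^2)_\infty\sum_{n\geq 0}q^n(-q;q^2)_n/(-q;q)_n$ is exactly the paper's first step, and your derivation of the second equality from Fine's identity $\nu(q)+q\omega(q^2)=(-q^2;q^2)_\infty^3(q^2;q^2)_\infty$ with $q\mapsto -q$, together with $\psi(q^2)=(-q^2;q^2)_\infty^2(q^2;q^2)_\infty$, is correct and is essentially the paper's final step in \eqref{nufinal}. The first equality, however, is the real content of the theorem, and your plan for it has a genuine gap. First, the specialization $a=-b=q^{1/2}$ of \eqref{gea90_thm1} gives denominator $(-aq;q)_n(-bq;q)_n=(q^3;q^2)_n$, not $(-q;q)_n$; to get $(-q;q)_n$ you are forced to $a=1$, $b\to 0$, and to manufacture $(-q;q^2)_n=(iq^{1/2};q)_n(-iq^{1/2};q)_n$ in the numerator you need the complex choice $B=iq^{1/2}$ with $A=B/(bq)$, which is what the paper does in \eqref{iii}--\eqref{vi}. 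Second, under the correct specialization the right-hand side of \eqref{gea90_thm1} does not split into ``something recognizable as $q\omega(q^2)$'' plus a half-integer Lambert series: it splits into $2S_2(q)$, which the $q$-analogue of Gauss's second theorem \eqref{gst} evaluates to $2\psi(q^2)$, and the partial theta series $S_1(q)=\sum_m(-q)^m(q;q^2)_m$. The crux of the proof is the evaluation $S_1(q)=(-q^2;q^2)_\infty\psi(q^2)-q\,\omega(q^2)$ (equation \eqref{xi}), which requires a separate chain of arguments (\eqref{p29gea}, a Gaussian-binomial even/odd splitting, \eqref{baicor}, Euler's identity, and Fine's representation $\omega(q)=\sum_n q^n/(q;q^2)_{n+1}$); nothing in your sketch supplies this. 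Lemma \ref{lemma1} and the ${}_1\psi_1$ evaluation \eqref{1psi1} play no role here, and your bookkeeping does not balance: the Lambert combination in Lemma \ref{lemma1} equals $\psi^2(q)=(q^2;q^2)_\infty^2/(q;q^2)_\infty^2$, not $\psi^2(q^2)$, and multiplying it by $(-q^2;q^2)_\infty$ does not give $(-q^2;q^2)_\infty\psi(q^2)$. Moreover, ``mirroring the second proof of Theorem \ref{qomegaq}'' would be circular, since that proof invokes \eqref{21}, which is precisely the identity \eqref{xi} established inside the proof of Theorem \ref{nuqthm}.

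Your backup route has the same problem. The identity $\sum_{m\geq 0}(-q;q^2)_mq^m=\nu(-q)$ is indeed true, and your proof of it (finite $q$-binomial theorem, interchange of summation, \eqref{finpoc1} in base $q^2$) is a nice, correct alternative to the paper's use of \eqref{p29gea}. But the ``remaining task'' you defer --- showing $\sum_n q^n(-q;q^2)_n/(-q;q)_n=\frac{1}{(-q^2;q^2)_\infty}\sum_m(-q;q^2)_mq^m$ --- is equivalent to the theorem itself and is left unproved: Heine's transformation does not apply directly because the sum mixes bases $q$ and $q^2$, and the actual specialization of \eqref{gea90_thm1} produces the differently-signed series $\sum_m(-q)^m(q;q^2)_m$ together with the extra term $2\psi(q^2)$, not the single series $\sum_m q^m(-q;q^2)_m$. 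So the decisive step --- extracting the $\omega$ and $\psi$ pieces --- is missing from both of your routes.
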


\proof
 If we divide both sides of the first equality in \eqref{thm1.1} by $(-q^2;q^2)_{\infty}$, we see that proving it is equivalent to showing
 \begin{align*}
 \sum_{n=0}^{\infty} \frac{q^n(-q;q^2)_n}{(-q;q)_n}=q\frac{\omega (q^2)}{(-q^2;q^2)_{\infty}} +\psi(q^2).
 \end{align*}
 
We now apply \eqref{gea90_thm1} with $A=B/aq$, $a=1$, and then let $b\to 0$.  This yields
\begin{align}
\sum_{n=0}^{\infty} \frac{(B^2;q^2)_n q^n}{(-q;q)_n}=-\frac{(B^2;q^2)_\infty}{(-q;q)_{\infty}} \sum_{m=0}^{\infty} \frac{B^m}{(-B;q)_{m+1}} +\sum_{m=0}^{\infty} \frac{(-1;q)_{m+1} (-1)^m q^{\binom{m}{2}} B^{2m}}{(B^2;q^2)_{m+1}}. \label{iii}
\end{align} 
Next from \eqref{finpoc1},
\begin{align}
\sum_{m=0}^{\infty} \frac{B^m}{(-B;q)_{m+1}} &=\sum_{m,n=0}^{\infty} B^{m+n} (-1)^n \left[ \begin{matrix} m+ n\\n \end{matrix} \right] \notag\\
&=\sum_{N=0}^{\infty} B^N \sum_{n=0}^{N} (-1)^n \left[ \begin{matrix} N\\n \end{matrix} \right] \notag\\
&=\sum_{N=0}^{\infty} B^{2N} \sum_{n=0}^{2N} (-1)^n \left[ \begin{matrix} 2N\\n \end{matrix} \right] \notag \\
&=\sum_{N=0}^{\infty} B^{2N} (q;q^2)_N, \label{iv}
\end{align}
where in the penultimate as well as in the ultimate step, we used \eqref{338}.

Substituting \eqref{iv} into \eqref{iii}, we find
\begin{align}
\sum_{n=0}^{\infty} \frac{(B^2;q^2)_n q^n}{(-q;q)_n} &=-\frac{(B^2;q^2)_{\infty}}{(-q;q)_{\infty}} \sum_{n=0}^{\infty} B^{2n} (q;q^2)_{n} +2\sum_{m=0}^{\infty} \frac{(-q;q)_m (-1)^m q^{\binom{m}{2}} B^{2n}} {(B^2;q^2)_{m+1}}.  \label{v}
\end{align}
Now set $B^2=-q$ (i.e., $B= iq^{1/2}$) in \eqref{v} to deduce that
\begin{align}
\sum_{m=0}^{\infty} \frac{(-q;q^2)_m q^m}{(-q;q)_m}&=-\frac{(-q;q^2)_{\infty}}{(-q;q)_{\infty}} \sum_{m=0}^{\infty} (-q)^m (q;q^2)_m +2\sum_{m=0}^{\infty} \frac{(-q;q)_m q^{\binom{m+1}{2}}}{(-q;q^2)_{m+1}}\notag \\
&=:\frac{-1}{(-q^2;q^2)_{\infty}} S_1(q) +2S_2(q). \label{vi}
\end{align}

We now evaluate $S_2(q)$:
\begin{align}
S_2(q)&=\sum_{m=0}^{\infty} \frac{q^{\binom{m+1}{2}}}{(q;q)_m} \frac{(-q;q)_m (q;q)_m}{(-q;q^2)_{m+1}} \notag\\
&= \frac{(-q;q)_{\infty} (-q^2;q^2)_{\infty} (q^2;q^2)_{\infty}}{(1+q)(-q^3;q^2)_{\infty}}\notag \\
&=\frac{(q^4;q^4)_{\infty}}{(q^2;q^4)_{\infty}} \notag \\
&=\psi(q^2), \label{vii}
\end{align}
where we used \eqref{gst} in the first step and \eqref{rampsi} in the last.
Note that from \cite[p.~29, Exercise 6]{gea},
\begin{equation}\label{p29gea}
\sum_{m=0}^{\infty}
\frac{q^{m^2}x^m}{(y;q^2)_{m+1}}=\sum_{m=0}^{\infty}(-xq/y;q^2)_{m}y^m.
\end{equation}
Using the above identity with $x=q$ and $y=-q$ in the first step below, and the $q$-binomial theorem \eqref{qbin} with $q$ replaced by $q^2, z=-q$ and $a=q^{2m+2}$ in the next step, we observe that
{\allowdisplaybreaks\begin{align}
S_1(q)&=\sum_{m=0}^{\infty} (-q)^m (q;q^2)_m\notag \\
&=\sum_{m=0}^{\infty} \frac{q^{m^2+m}}{(-q;q^2)_{m+1}}\notag \\
&=\sum_{m=0}^{\infty} q^{m^2+m} \sum_{n=0}^{\infty} (-1)^n q^n \left[ \begin{matrix} n+ m\\m \end{matrix} \right]_{q^2}\notag \\
&=\sum_{m=0}^{\infty} q^{m^2+m} \sum_{n=0}^{\infty} q^{2n} \left[ \begin{matrix} 2n+ m\\m \end{matrix} \right]_{q^2} -\sum_{m=0}^{\infty} q^{m^2+m} \sum_{n=0}^{\infty} q^{2n+1} \left[ \begin{matrix} 2n+1+ m\\m \end{matrix} \right]_{q^2}\notag \\
&=:T_1(q^2)-qT_2(q^2). \label{viii}
\end{align}}

Also,
{\allowdisplaybreaks\begin{align}
T_1(q)&=\sum_{n=0}^{\infty} q^n \sum_{m=0}^{\infty} \frac{q^{\binom{m+1}{2}}(q^{2n+1};q)_m}{(q;q)_m}\notag \\
&=\sum_{n=0}^{\infty} q^{n} (q^{2n+2};q^2)_{\infty} (-q;q)_{\infty}\notag \\
&=(-q;q)_{\infty} (q^2;q^2)_{\infty}\sum_{n=0}^{\infty} \frac{q^n}{(q^2;q^2)_n}\notag \\
&=\frac{(-q;q)_{\infty} (q^2;q^2)_{\infty}}{(q;q^2)_{\infty}}\notag \\
&=(-q;q)_{\infty} \psi(q), \label{ix}
\end{align}}%
where \eqref{baicor} with $a=q^{2n+1}$ was used in the second step. Again using \eqref{baicor}, this time with $a=q^{2n+2}$, in the second step below, we find that
{\allowdisplaybreaks\begin{align}
T_2(q)&=\sum_{n=0}^{\infty} q^{n} \sum_{m=0}^{\infty} \frac{q^{\binom{m+1}{2}}(q^{2n+2};q)_{m}}{(q;q)_m}\notag \\
&=\sum_{n=0}^{\infty} q^n (q^{2n+3};q^2)_n (-q;q)_{\infty}\notag \\
&=(-q;q)_{\infty} (q;q^2)_{\infty} \sum_{n=0}^{\infty} \frac{q^n}{(q;q^2)_{n+1}}\notag \\
&=\sum_{n=0}^{\infty} \frac{q^n}{(q;q^2)_{n+1}}\notag \\
&=\omega(q), \label{x}
\end{align}}
where the last step follows from \cite[p.~61, Equation (2.6.84)]{fine}.

Substituting \eqref{ix}  and \eqref{x} into \eqref{viii},  we find that
\begin{align}
S_{1}(q)&=(-q^2;q^2)_{\infty} \psi(q^2)-q\; \omega(q^2). \label{xi}
\end{align}
Now substitute \eqref{xi} and \eqref{vii} in \eqref{vi} to finally deduce
{\allowdisplaybreaks\begin{align}\label{nufinal}
\sum_{n=0}^{\infty} \frac{(-q;q^2)_nq^n}{(-q;q)_n}& =\frac{-1}{(-q^2;q^2)_{\infty}} \left((-q^2;q^2)_{\infty}\psi(q^2)-q\omega(q^2)\right) +2\psi(q^2)\nonumber\\
&=q \frac{\omega(q^2)}{(-q^2;q^2)_{\infty}} +\psi(q^2)\nonumber\\
&=\frac{\nu(-q)}{(-q^2;q^2)_{\infty}},
\end{align}}
where the last step results from \cite[p.~62, Equation (26.88)]{fine}.
\endproof

The result corresponding to Theorem \ref{nuqthm} for partitions into distinct parts where each even part is at most twice the smallest part is presented below.
\begin{theorem}\label{phiqthm}
Let $\phi(q)$ be defined in \eqref{phiq}. Then,
\begin{align*}
1+ q \sum_{n=0}^{\infty} q^n (-q^{n+1};q)_n (-q^{2n+1};q^2)_{\infty} = \frac{1-\phi(q)}{(-q;q^2)_{\infty}} +(q^2;q^2)_{\infty} (-q;q^2)_{\infty}^2.
\end{align*}
\end{theorem}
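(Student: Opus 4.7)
The plan is to mirror the proof of Theorem~\ref{nuqthm} almost verbatim, adjusting only the parity of the exponents in the key specialization. Using the factorizations $(-q;q)_{2n}=(-q;q^2)_n(-q^2;q^2)_n$, $(-q^{n+1};q)_n=(-q;q)_{2n}/(-q;q)_n$, and $(-q^{2n+1};q^2)_\infty=(-q;q^2)_\infty/(-q;q^2)_n$, the sum on the left factors as
\begin{equation*}
\sum_{n=0}^{\infty}q^n(-q^{n+1};q)_n(-q^{2n+1};q^2)_\infty = (-q;q^2)_\infty\sum_{n=0}^{\infty}\frac{q^n(-q^2;q^2)_n}{(-q;q)_n},
\end{equation*}
so the proof reduces to evaluating this unilateral series, multiplying through by $q(-q;q^2)_\infty$, and adding $1$.

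Following the proof of Theorem~\ref{nuqthm}, I apply \eqref{gea90_thm1} with $A=B/(aq)$, $a=1$, $b\to 0$ to reach the parametric identity \eqref{v}, and then specialize $B^2=-q^2$ (in place of the $B^2=-q$ used in the $\nu$-case). Using $(-q^2;q^2)_\infty/(-q;q)_\infty=1/(-q;q^2)_\infty$, this yields the decomposition
\begin{equation*}
\sum_{n=0}^{\infty}\frac{q^n(-q^2;q^2)_n}{(-q;q)_n} = -\frac{S_1^{\ast}(q)}{(-q;q^2)_\infty}+2S_2^{\ast}(q),
\end{equation*}
where $S_1^{\ast}(q):=\sum_{m\ge 0}(q;q^2)_m(-q^2)^m$ and $S_2^{\ast}(q):=\sum_{m\ge 0}\dfrac{(-q;q)_m\,q^{\binom{m+1}{2}+m}}{(-q^2;q^2)_{m+1}}$.

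The sum $S_1^{\ast}(q)$ is handled immediately by \eqref{p29gea} with $x=q^2$ and $y=-q^2$ (so that $-xq/y=q$); this transforms $S_1^{\ast}(q)$ into $\sum_{m\ge 0}q^{m^2+2m}/(-q^2;q^2)_{m+1}$, and reindexing $k=m+1$ and subtracting off the $k=0$ term of $\phi(q)$ yields $S_1^{\ast}(q)=(\phi(q)-1)/q$. This is the mechanism by which the mock theta function $\phi(q)$ enters the final identity, producing the $(1-\phi(q))/(-q;q^2)_\infty$ summand on the right after the reduction of the first paragraph is reversed.

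The hard step is evaluating $S_2^{\ast}(q)$, for which the one-shot application of \eqref{gst} used in the $\nu$-proof is blocked by the extra $q^m$ in the numerator. I plan to apply \eqref{gst} with $a=-q^2$, $b=q$, using $(-q^2;q)_n=(-q;q)_{n+1}/(1+q)$ and $(-q^2;q^2)_{n+1}=(1+q^2)(-q^4;q^2)_n$ to recast the right-hand product conveniently; this yields a closed infinite-product expression for a combination of the shape $U(q)+qS_2^{\ast}(q)$, where $U(q):=\sum_{m\ge 0}(-q;q)_m q^{\binom{m+1}{2}}/(-q^2;q^2)_{m+1}$. A second application of \eqref{gst} (or of \eqref{baicor} inside a double-sum rearrangement analogous to the $T_1,T_2$ computation in the proof of Theorem~\ref{nuqthm}) should then evaluate $U(q)$ explicitly, isolating $qS_2^{\ast}(q)$ as the infinite product which, after being multiplied by $2(-q;q^2)_\infty$ in Step~1, reproduces the $(q^2;q^2)_\infty(-q;q^2)_\infty^2$ summand of the RHS. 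The main obstacle is precisely this two-step evaluation of $S_2^{\ast}$: the alternating appearances of $(-q;q^2)_\infty$ and $(-q^2;q^2)_\infty$ in the intermediate products have to be tracked with care, since mis-grouping them would produce spurious terms in the final answer.
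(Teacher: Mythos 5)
Your opening reduction and the first half of the plan do coincide with the paper's proof: the specialization of \eqref{gea90_thm1} leading to \eqref{v}, the choice $B^2=-q^2$, and the evaluation of $S_1^{\ast}(q)=\sum_{m\ge 0}(q;q^2)_m(-q^2)^m=(\phi(q)-1)/q$ via \eqref{p29gea} are exactly the paper's steps (its $U_1$). The first genuine problem is the assembly. Having (correctly) written the left-hand sum as $(-q;q^2)_\infty\sum_{n\ge0}q^n(-q^2;q^2)_n/(-q;q)_n$, multiplying the decomposition back by $q(-q;q^2)_\infty$ gives $-qS_1^{\ast}(q)=1-\phi(q)$, not $(1-\phi(q))/(-q;q^2)_\infty$, and it attaches an unwanted factor $(-q;q^2)_\infty$ to the $S_2^{\ast}$-term; since in fact $S_2^{\ast}(q)=\sum_{n\ge0}q^{n^2+2n}$ (the paper's $U_2$), the theta summand arises from $1+2qS_2^{\ast}(q)=\sum_{n=-\infty}^{\infty}q^{n^2}=(q^2;q^2)_\infty(-q;q^2)_\infty^2$ with no extra factor. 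In other words, the displayed right-hand side is what $1+q\sum_{n\ge0}q^n(-q^2;q^2)_n/(-q;q)_n$ equals, and that Eulerian series is what the paper's final display actually works with (silently identifying it with the product-form sum); with the prefactor kept, as in your Step 1, the two sides of the stated identity already disagree at $q^2$ (coefficient $2$ on the left, $1$ on the right). So no faithful execution of your plan can terminate at the stated right-hand side: you must prove the identity for the series with the factor $(-q;q^2)_\infty$ removed, which your own factorization makes plain.

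The second gap is the evaluation of $S_2^{\ast}$, which is where the real work lies and which your plan leaves unresolved. The paper does not use \eqref{gst} here: it applies the base-changing transformation of \cite[p.~29, Exercise 4]{gea} with $a=-q^2/t$, $b=q^2$, $t\to 0$ to obtain $S_2^{\ast}(q)=\sum_{n\ge0}q^{n^2+2n}$ outright, and then invokes the Jacobi triple product. Your first \eqref{gst} step (with $a=-q^2$, $b=q$) is legitimate and, after simplification, yields $U(q)+qS_2^{\ast}(q)=(q^2;q^2)_\infty(-q;q^2)_\infty^2$; but the promised second application of \eqref{gst} to $U(q)=\sum_{m\ge0}(-q;q)_m q^{\binom{m+1}{2}}/(-q^2;q^2)_{m+1}$ is not available, since matching the numerator forces $(a,b)=(-q,q)$ and hence the denominator $(-q^3;q^2)_m$ rather than the needed $(-q^4;q^2)_m$. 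You would still have to prove $U(q)=\sum_{n\ge0}q^{n^2}$ by some other device (for instance the same Exercise 4 transformation), and even then the resulting value of $S_2^{\ast}$ feeds into the corrected assembly above, not into the endgame you describe.
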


\begin{proof}

In \eqref{gea90_thm1}, we set $A=B/bq$ and $a=1$, and then let $b\to 0$. Then we obtain
\begin{align*}
\sum_{n=0}^{\infty} \frac{(B^2;q^2)_n q^n}{(-q;q)_n}= -\frac{(B^2;q^2)_{\infty}}{(-q;q)_{\infty}} \sum_{n=0}^{\infty} B^{2n}  (q;q^2)_n +2\sum_{m=0}^{\infty} \frac{(-q;q)_m (-1)^m q^{\binom{m}{2}} B^{2m}}{(B^2;q^2)_{m+1}}. 
\end{align*}
We now set $B=qi$, and we obtain
{\allowdisplaybreaks\begin{align*}
\sum_{n=0}^{\infty} \frac{(-q^2;q^2)_n q^n}{(-q;q)_n} &= -\frac{(-q^2;q^2)_{\infty}}{(-q;q)_{\infty}} \sum_{n=0}^{\infty} (-1)^n q^{2n}  (q;q^2)_n +2\sum_{m=0}^{\infty} \frac{(-q;q)_m  q^{\binom{m}{2}}  q^{2m}}{(-q^2;q^2)_{m+1}}\\
&=\frac{-1}{(-q;q^2)_{\infty}} \sum_{n=0}^{\infty} (-1)^n q^{2n}  (q;q^2)_n +2\sum_{m=0}^{\infty} \frac{(-q;q)_m  q^{\binom{m}{2}}  q^{2m}}{(-q^2;q^2)_{m+1}}\\
&=: \frac{-1}{(-q;q^2)_{\infty}} U_1(q)+ 2U_2(q). 
\end{align*}}
We now claim that
\begin{align*}
U_1(q)&=q^{-1} (\phi(q) -1),\\
U_2(q)&=\sum_{n=0}^{\infty} q^{n^2+2n} = q^{-1} \sum_{n=1}^{\infty} q^{n^2}.
\end{align*}

First, for $U_1(q)$, set $x=q^2, y=-q^2$ in \eqref{p29gea}. Then,
\begin{align*}
U_1(q)=\sum_{m=0}^{\infty} (q;q^2)_m (-q^2)^m =\sum_{m=0}^{\infty} \frac{q^{m^2+2m}}{(-q^2;q^2)_{m+1}}=q^{-1} \sum_{m=1}^{\infty}\frac{q^{m^2}}{(-q^2;q^2)_m}=q^{-1}(\phi(q)-1).
\end{align*}
Next, note that from \cite[p.~29, Exercise 4]{gea}, we have
\begin{equation*}
\sum_{n=0}^{\infty}\frac{(a;q)_n(b;q^2)_nt^n}{(q;q)_n(atb;q^2)_n}=\frac{(at;q^2)_{\infty}(bt;q^2)_{\infty}}{(t;q^2)_{\infty}(abt;q^2)_{\infty}}\sum_{m=0}^{\infty}\frac{(a;q^2)_{m}(b;q^2)_{m}(tq)^{m}}{(q^2;q^2)_{m}(bt;q^2)_{m}}.
\end{equation*}
Now set $a\to -q^2/t, b\to q^2$ in the above equation to see that
\begin{align*}
\sum_{n=0}^{\infty} \frac{ (-q^2/t;q)_n (q^2;q^2)_n  t^n}{ (q;q)_n (-q^4;q^2)_n}=\frac{(-q^2;q^2)_{\infty} (q^2 t ;q^2)_{\infty}}{(t;q^2)_{\infty} (-q^4;q^2)_{\infty}} \sum_{n=0}^{\infty} \frac{ (-q^2/t;q^2)_n (q^2;q^2)_n (tq)^n}{(q^2;q^2)_{n} (q^2 t;q^2)_n}.
\end{align*}
Let $t\to 0$. Then
\begin{align*}
\sum_{n=0}^{\infty} \frac{(q^2;q^2)_n q^{\binom{n}{2}+2n}}{(q;q)_n (-q^2;q^2)_{n+1}}=\sum_{n=0}^{\infty} q^{n^2+2n}.
\end{align*}
Hence,
\begin{align*}
U_2(q)&=\sum_{m=0}^{\infty} \frac{(-q;q)_m  q^{\binom{m}{2}}  q^{2m}}{(-q^2;q^2)_{m+1}}\\
&=\sum_{n=0}^{\infty} \frac{(q^2;q^2)_n q^{\binom{n}{2}+2n}}{(q;q)_n (-q^2;q^2)_{n+1}}=\sum_{n=0}^{\infty} q^{n^2+2n}
\end{align*}
Finally,
\begin{align*}
q \sum_{n=0}^{\infty} q^n (-q^{n+1};q)_n (-q^{2n+1};q^2)_{\infty} &= \frac{-qU_1(q)}{(-q;q^2)_{\infty}} + 2 qU_2(q)\\
&=\frac{1-\phi(q)}{(-q;q^2)_{\infty}} +\sum_{n=-\infty}^{\infty} q^{n^2} -1\\
&=\frac{1-\phi(q)}{(-q;q^2)_{\infty}} +(q^2;q^2)_{\infty} (-q;q^2)_{\infty}^2 -1,
\end{align*}
where in the last step we used the Jacobi triple product identity \cite[p.~21, Theorem 2.8]{gea}.
\end{proof}

\section{Analogues of Euler's pentagonal number theorem}\label{eulerpenta}
We begin with a result, motivated from studying a generating function similar to that of $p_{\omega}(n)$, which has an interesting partition-theoretic interpretation analogous to that of Euler's pentagonal number theorem \cite[p.~10, Theorem 1.6]{gea}. This interpretation is provided after the proof of the following theorem.
\begin{theorem}
The following identity holds:
\begin{align*}
\sum_{n=1}^{\infty} \frac{q^n}{(-q^n;q)_{n+1}(-q^{2n+2};q^2)_{\infty}} =\sum_{j=0}^{\infty} (-1)^j q^{6j^2+4j+1}(1+q^{4j+2}).
\end{align*}
\end{theorem}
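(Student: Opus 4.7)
The plan is to follow the same three-step strategy used in the proofs of Theorems \ref{qomegaq} and \ref{nuqthm}: reduce the left-hand side to a single hypergeometric-type series, apply a suitable specialisation of the four-parameter identity \eqref{gea90_thm1}, and identify the outcome with the series on the right. Using $(-q;q)_{2n}=(-q;q^2)_n(-q^2;q^2)_n$ and cancelling $(-q^2;q^2)_n$ gives
$$
(-q^n;q)_{n+1}(-q^{2n+2};q^2)_\infty = \frac{(-q;q^2)_n\,(-q^2;q^2)_\infty}{(-q;q)_{n-1}},
$$
so that, after the shift $n\mapsto n+1$,
$$
\sum_{n=1}^\infty\frac{q^n}{(-q^n;q)_{n+1}(-q^{2n+2};q^2)_\infty} = \frac{q}{(1+q)(-q^2;q^2)_\infty}\sum_{n=0}^\infty\frac{(-q;q)_n\,q^n}{(-q^3;q^2)_n}.
$$
This is the sign-twisted analogue of the sum appearing in \eqref{15}.

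To the inner series I would apply \eqref{gea90_thm1}. Matching $(-q^3;q^2)_n=\prod_{k=1}^n(1+q^{2k+1})$ against the denominator $(-aq;q)_n(-bq;q)_n$ forces $a+b=0$ and $ab=q$, hence $a=iq^{1/2}$, $b=-iq^{1/2}$. Choosing $B=-q$ reproduces the numerator $(-q;q)_n$, and the limit $A\to 0$ (used exactly as in the derivation of \eqref{5}) removes the $(-Abq;q)_n$ factor. The right-hand side of \eqref{gea90_thm1} then breaks into a theta piece $(-q;q)_\infty/(-q^3;q^2)_\infty$ times a Lambert sum, plus a second Lambert sum, both in the imaginary argument $\pm iq^{1/2}$. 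Each complex Lambert sum is rationalised by pairing conjugate factors $1\pm iq^{k-1/2}$ (whose product is $1+q^{2k-1}$), reducing it to real series of the form $\sum q^n/(1\pm q^{4n+r})$, already controlled by Lemma \ref{lemma1} and \eqref{1psi1}.

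A key observation for the final step is that the right-hand side
$$
\sum_{j\ge 0}(-1)^j q^{6j^2+4j+1}(1+q^{4j+2}) = \sum_{j\ge 0}(-1)^j q^{6j^2+4j+1} - \sum_{j\le -1}(-1)^j q^{6j^2+4j+1}
$$
is a \emph{false} theta function, obtained from the standard theta $\sum_{j\in\mathbb{Z}}(-1)^j q^{6j^2+4j+1}$ by flipping the sign on the $j\le -1$ portion; in particular, it does \emph{not} admit a Jacobi triple or quintuple product expansion. The closing identification must therefore proceed via a Lambert-series representation of this false theta, in the spirit of the Lambert formula for $\psi^2(q)$ that appears in the remark following \eqref{finid}. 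The chief obstacle is producing the exact Lambert identity that, when multiplied by the prefactor $q/((1+q)(-q^2;q^2)_\infty)$, matches our rationalised sum — this is the false-theta analogue of the two-term Lambert identity of Lemma \ref{lemma1}. A cleaner alternative is to bypass \eqref{gea90_thm1} entirely and attack $\sum_n(-q;q)_nq^n/(-q^3;q^2)_n$ directly by a Heine-type ${}_2\phi_1$ transformation (in the spirit of the manipulation at \eqref{11}), which may deliver the false-theta form in one stroke.
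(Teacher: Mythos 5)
Your opening reduction is exactly the paper's: the same rewriting and shift give $\frac{q}{(1+q)(-q^2;q^2)_\infty}\sum_{n\ge 0}\frac{(-q;q)_nq^n}{(-q^3;q^2)_n}$. After that, however, your plan has a genuine gap, and in fact two. First, the proposed specialisation of \eqref{gea90_thm1} with $a=iq^{1/2}$, $b=-a$, $B=-q$, $A\to 0$ does not behave like the paper's earlier applications: the second sum in \eqref{gea90_thm1} is only tractable there because $(-a^{-1};q)_{m+1}$ and $(-B/a;q)_{m+1}$ share all but boundary factors and telescope to a single Lambert-type term; with your choices these become $(iq^{-1/2};q)_{m+1}$ and $(-iq^{1/2};q)_{m+1}$, which carry opposite signs of $i$ and do not cancel, so you are left with products of ratios of complex factors, not a Lambert series, and the ``pairing of conjugates'' does not reduce the expression to sums of the shape $\sum q^n/(1\pm q^{4n+r})$. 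Moreover, Lemma \ref{lemma1} and \eqref{1psi1} evaluate such Lambert sums as genuine theta quotients, which can never produce the false theta on the right-hand side — a tension you notice but do not resolve. Second, and decisively, the closing identification is missing: you explicitly leave open ``the exact Lambert identity'' needed to reach $\sum_{j\ge0}(-1)^jq^{6j^2+4j+1}(1+q^{4j+2})$, and this is precisely the crux of the theorem.

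The paper avoids \eqref{gea90_thm1} altogether here. From the reduced sum it writes $(-q;q)_n=(q^2;q^2)_n/(q;q)_n$, expands $(-q^{2n+3};q^2)_\infty/(q^{2n+2};q^2)_\infty$ by the $q$-binomial theorem \eqref{qbin}, interchanges summation and applies \eqref{euleri}, arriving at $q\sum_{m\ge0}(q^2;q^4)_mq^{2m}$ (your suggested Heine-style shortcut is essentially the paper's remark, via Entry 9.3.3 of \cite{AB1}, and does land on this same partial series — but only on it, not on the false theta itself). The indispensable final ingredient is the Rogers/Ramanujan false theta identity, Entry 9.5.2 of \cite{AB1},
\begin{equation*}
\sum_{m=0}^{\infty}(q;q^2)_m q^{m}=\sum_{j=0}^{\infty}(-1)^j q^{3j^2+2j}\left(1+q^{2j+1}\right),
\end{equation*}
used with $q\mapsto q^2$. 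Without citing or proving such a false theta evaluation, your argument cannot be completed; supplying it (and replacing the \eqref{gea90_thm1} detour by the $q$-binomial/Euler interchange or the Heine-type transformation) is what is needed to turn your sketch into a proof.
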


\proof
{\allowdisplaybreaks\begin{align*}
\sum_{n=1}^{\infty} \frac{q^n}{(-q^n;q)_{n+1} (-q^{2n+2};q^2)_{\infty}} &=\sum_{n=1}^{\infty} \frac{(-q;q)_{n-1} q^n}{(-q;q)_{2n} (-q^{2n+2};q^2)_{\infty}} \\
&=\frac{1}{(-q^2;q^2)_{\infty}} \sum_{n=0}^{\infty} \frac{(-q;q)_n q^{n+1} (-q^2;q^2)_{n+1}}{(-q;q)_{2n+2}}\\
&=\frac{q}{(1+q)(-q^2;q^2)_{\infty}} \sum_{n=0}^{\infty} \frac{(-q;q)_n q^n}{(-q^3;q^2)_n}\\
&=\frac{q}{(1+q)(-q^2;q^2)_{\infty}} \sum_{n=0}^{\infty} \frac{ q^n}{(q;q)_n}\frac{(q^2;q^2)_n}{(-q^3;q^2)_n}\\
&=\frac{q(q^2;q^2)_{\infty}}{(-q;q^2)_{\infty} (-q^2;q^2)_{\infty}} \sum_{n=0}^{\infty} \frac{ q^n}{(q;q)_n}\frac{(-q^{2n+3};q^2)_\infty}{(q^{2n+2};q^2)_\infty}\\
&=\frac{q(q^2;q^2)_{\infty}}{(-q;q)_{\infty}} \sum_{n=0}^{\infty} \frac{ q^n}{(q;q)_n}
\sum_{m=0}^{\infty} \frac{(-q;q^2)_m q^{m(2n+2)}}{(q^{2};q^2)_m}\\
&={q(q;q)_{\infty}} \sum_{m=0}^{\infty}  \frac{(-q;q^2)_m q^{2m}}{(q^{2};q^2)_m} \frac{1}{(q^{2m+1};q)_{\infty}}\\
&={q} \sum_{m=0}^{\infty}  \frac{(-q;q^2)_m (q;q)_{2m} q^{2m}}{(q^{2};q^2)_m} \\
&=q \sum_{m=0}^{\infty} (q^2;q^4)_m q^{2m}\\
&=\sum_{n=0}^{\infty} (-1)^n q^{6n^2+4n+1} (1+q^{4n+2}),
\end{align*}}
where we used \eqref{qbin} in the sixth step, \eqref{euleri} in the next, and finally Entry 9.5.2 from \cite[p.~238]{AB1} in the last step.  

\textbf{Remark.} One can also set $a=-q, b=q, t=q^2$ and $c\to 0$ in \cite[p.~67, Theorem $A_3$]{geaquart}, \cite[p.~229, Equation (9.3.3)]{AB1} so as to obtain
{\allowdisplaybreaks\begin{align*}
\sum_{n=0}^{\infty}\frac{(q^2;q^2)_nq^n}{(q;q)_n(-q^3;q^2)_n}&=\frac{(q^2;q^2)_{\infty}}{(q;q)_{\infty}(-q^3;q^2)_{\infty}}\sum_{n=0}^{\infty}\frac{(-q;q^2)_n(q;q)_{2n}q^{2n}}{(q^2;q^2)_n}\nonumber\\
&=\frac{(q^2;q^2)_{\infty}}{(q;q)_{\infty}(-q^3;q^2)_{\infty}}\sum_{n=0}^{\infty}(q^2;q^4)_nq^{2n}
\end{align*}}
directly.

One can conceive the above result through its interesting partition-theoretic interpretation given in the following theorem.

\begin{theorem}\label{pti1}
Among the partitions in which each odd part is less than twice the smallest part, let $d_o(n)$ be the number of such partitions with an odd number of parts and let $d_e(n)$ be the number of such partitions with an even number of parts. Then for $j\geq 0$,
\begin{equation*}
d_o(n)-d_e(n)=
\begin{cases}
(-1)^j,\hspace{2mm}\text{if}\hspace{2mm}n=6j^2+4j+1\hspace{2mm}\text{or}\hspace{2mm}n=6j^2+8j+3,\\
0,\hspace{2mm}\text{otherwise}. 
\end{cases}
\end{equation*}
\end{theorem}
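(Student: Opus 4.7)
The plan is to interpret the left-hand side of the preceding theorem combinatorially as a signed generating function for partitions in which each odd part is less than twice the smallest part, weighted by $(-1)^{\ell(\lambda)-1}$, and then compare coefficients with the already-evaluated right-hand side.

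For fixed $n$, I would expand the $n$-th summand $q^n/((-q^n;q)_{n+1}(-q^{2n+2};q^2)_\infty)$ via the geometric series $1/(1+q^k) = \sum_{m\ge 0}(-1)^m q^{km}$. The prefactor $q^n$ records one mandatory part equal to $n$; expanding $1/(-q^n;q)_{n+1}$ contributes multisets of additional parts drawn from $\{n,n+1,\ldots,2n\}$, with sign $-1$ per occurrence; and expanding $1/(-q^{2n+2};q^2)_\infty$ contributes multisets of even parts $\ge 2n+2$, again with sign $-1$ per occurrence. A short case check on the parity of $n$ shows that the union of admissible parts is exactly the set of integers $k\ge n$ for which $k$ is even or $k<2n$, i.e.\ the parts permitted in a partition whose smallest part equals $n$ and all of whose odd parts are less than $2n$.

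Consequently, every partition $\lambda$ with smallest part $n$ and each odd part $<2n$ is recorded exactly once by the $n$-th summand, and is assigned sign $(-1)^{\ell(\lambda)-1}$ (one $-1$ for each part other than the fixed mandatory $n$: note in particular that if $n$ appears with multiplicity $k$ in $\lambda$, the factors $(-1)^{k-1}$ from the expansion of $(1+q^n)^{-1}$ and $(-1)^{\ell(\lambda)-k}$ from the other parts combine correctly). Summing over $n$, the coefficient of $q^N$ in the left-hand side becomes
\[
\sum_{\lambda\vdash N}(-1)^{\ell(\lambda)-1} \;=\; d_o(N)-d_e(N),
\]
where $\lambda$ ranges over partitions of $N$ in which each odd part is less than twice the smallest part. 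Extracting the coefficient of $q^N$ from the right-hand side $\sum_{j\ge 0}(-1)^j(q^{6j^2+4j+1}+q^{6j^2+8j+3})$, and noting that the two families of exponents are pairwise distinct as $j$ ranges over $\mathbb{Z}_{\ge 0}$, then yields the stated evaluation of $d_o(N)-d_e(N)$.

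The main obstacle is the bookkeeping in the combinatorial identification: one must verify both that the mandatory $q^n$ together with the expansion of $(-q^n;q)_{n+1}^{-1}$ records the multiplicity of the part $n$ correctly (with the right overall sign $(-1)^{\ell(\lambda)-1}$, independent of how many times $n$ occurs), and that the admissible parts, as $n$ varies, assemble into precisely the "odd parts less than twice the smallest part" constraint with no over- or under-counting. Once this is settled, the theorem follows by routine coefficient extraction from the identity of the preceding theorem.
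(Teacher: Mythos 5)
Your proposal is correct and follows exactly the route the paper intends: the paper states this theorem as the partition-theoretic reading of the preceding identity, with the left-hand side interpreted as the signed generating function $\sum_\lambda(-1)^{\ell(\lambda)-1}$ over the partitions in question, and the right-hand side read off coefficientwise (the exponents $6j^2+4j+1$ and $6j^2+8j+3$ being the values of $6j^2+4j+1$ at $j$ and $-(j+1)$, hence all distinct). Your sign bookkeeping for the multiplicity of the smallest part and the identification of the admissible parts are both accurate, so the argument is complete.
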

For example, if $n=3$, the three partitions to be considered are $3, 2+1, 1+1+1$, and $d_o(3)=2$ and $d_e(3)=1$, where as 
if $n=4$, the four partitions are $4, 2+2, 2+1+1, 1+1+1+1$, and $d_o(4)=2=d_e(4)$.

There is another result that one can obtain by working analogously with a generating function similar to that of $p_{\nu}(n)$, namely,
\begin{theorem}
We have
\begin{align*}
\sum_{n=0}^{\infty} q^n (q^{n+1};q)_n (q^{2n+2};q^2)_{\infty} =\sum_{j=0 }^{\infty} (-1)^j q^{j(3j+2)} (1+q^{2j+1}).
\end{align*}
\end{theorem}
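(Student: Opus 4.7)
The plan is to reduce the LHS to the single series $\sum_{m\ge 0}(q;q^2)_m q^m$ via a short chain of standard Pochhammer identities and two applications of Euler's formula \eqref{euleri}, and then to match this sum with the right-hand side by invoking Entry 9.5.2 of \cite[p.~238]{AB1} after the substitution $q\to q^{1/2}$, exactly as was done in the proof of the preceding theorem.

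Concretely, I would first use $(q^{n+1};q)_n=(q;q)_{2n}/(q;q)_n$ together with $(q;q)_{2n}=(q;q^2)_n(q^2;q^2)_n$ and $(q^{2n+2};q^2)_\infty=(q^2;q^2)_\infty/(q^2;q^2)_n$ to rewrite the LHS as $(q^2;q^2)_\infty\sum_{n\ge 0} q^n(q;q^2)_n/(q;q)_n$. Then I would replace $(q;q^2)_n$ by $(q;q^2)_\infty/(q^{2n+1};q^2)_\infty$ and combine $(q^2;q^2)_\infty(q;q^2)_\infty=(q;q)_\infty$ to obtain $(q;q)_\infty\sum_{n\ge 0} q^n/[(q;q)_n(q^{2n+1};q^2)_\infty]$. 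Expanding $1/(q^{2n+1};q^2)_\infty=\sum_{m\ge 0} q^{m(2n+1)}/(q^2;q^2)_m$ by \eqref{euleri} in base $q^2$, interchanging the order of summation, and summing the resulting inner $n$-series $\sum_{n\ge 0} q^{n(2m+1)}/(q;q)_n=1/(q^{2m+1};q)_\infty$ by \eqref{euleri} in base $q$, the LHS becomes
\[
(q;q)_\infty\sum_{m\ge 0}\frac{q^m}{(q^2;q^2)_m(q^{2m+1};q)_\infty}.
\]
The identities $(q;q)_\infty/(q^{2m+1};q)_\infty=(q;q)_{2m}$ and $(q;q)_{2m}/(q^2;q^2)_m=(q;q^2)_m$ then collapse this to $\sum_{m\ge 0}(q;q^2)_m q^m$. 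Finally, Entry 9.5.2 of \cite[p.~238]{AB1}, which appeared in the preceding proof as $q\sum_{m}(q^2;q^4)_m q^{2m}=\sum_{n}(-1)^n q^{6n^2+4n+1}(1+q^{4n+2})$, yields after the substitution $q\to q^{1/2}$ (and division by $q^{1/2}$) the identity $\sum_{m\ge 0}(q;q^2)_m q^m=\sum_{j\ge 0}(-1)^j q^{j(3j+2)}(1+q^{2j+1})$, which is the right-hand side.

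The proof consists entirely of routine $q$-Pochhammer manipulations, so no serious obstacle arises. The critical observation is that two successive uses of Euler's formula \eqref{euleri}—once in base $q^2$ to expand an infinite product into a series in $m$, and once in base $q$ to resum the resulting inner $n$-series—together with the telescoping $(q;q)_\infty/(q^{2m+1};q)_\infty=(q;q)_{2m}$, reduce the double sum to a single partial theta series already handled by the classical identity used for the previous theorem.
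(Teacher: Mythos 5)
Your proposal is correct and follows essentially the same route as the paper: the same Pochhammer rewriting to $(q^2;q^2)_\infty\sum_n (q;q^2)_n q^n/(q;q)_n$, the same double use of Euler's formula \eqref{euleri} with interchange of summation to collapse the sum to $\sum_m (q;q^2)_m q^m$, and the same appeal to Entry 9.5.2 of \cite[p.~238]{AB1} for the final partial theta evaluation.
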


\proof
Starting with the left-hand side and using \eqref{euleri} in the third and the fifth step below, we see that
{\allowdisplaybreaks\begin{align*}
\sum_{n=0}^{\infty} q^n (q^{n+1};q)_n (q^{2n+2};q^2)_{\infty}  &=(q^2;q^2)_{\infty} \sum_{n=0}^{\infty} \frac{(q;q^2)_{n} q^n}{(q;q)_{n} } \\
&={(q^2;q^2)_{\infty} (q;q^2)_{\infty} } \sum_{n=0}^{\infty} \frac{q^{n} }{(q;q)_{n} (q^{2n+1};q^2)_{\infty} }\\
&=(q;q)_{\infty} \sum_{n=0}^{\infty} \frac{ q^n}{(q;q)_n} \sum_{m=0}^{\infty} \frac{q^{(2n+1)m}}{(q^2;q^2)_{m}} \\
&=(q;q)_{\infty} \sum_{m=0}^{\infty} \frac{q^m}{(q^2;q^2)_{m}} \sum_{n=0}^{\infty} \frac{q^{n(2m+1)}}{(q;q)_{n}}\\
&=(q;q)_{\infty} \sum_{m=0}^{\infty} \frac{q^m}{(q^2;q^2)_{m} (q^{2m+1};q)_{\infty}} \\
&=\sum_{m=0}^{\infty} \frac{(q;q)_{2m} q^m}{(q^2;q^2)_m}\\
&= \sum_{m=0}^{\infty} (q;q^2)_m q^{m}\\
&=\sum_{m=0}^{\infty} (-1)^m q^{3n^2+2n} (1+q^{2n+1}),
\end{align*}}
again by Entry 9.5.2 in \cite[p.~238]{AB1}.

\endproof
This result also has a partition-theoretic interpretation similar to that of Theorem \ref{pti1}.
\begin{theorem}\label{pti2}
Among the partitions into distinct parts such that each odd part is less than twice the smallest part, let $d_o(n)$ be the number of such partitions with an odd number of parts and let $d_e(n)$ be the number of such partitions with an even number of parts. Then for $j\geq 0$,
\begin{equation*}
d_o(n)-d_e(n)=
\begin{cases}
(-1)^j,\hspace{2mm}\text{if}\hspace{2mm}n=3j^2+2j\hspace{2mm}\text{or}\hspace{2mm}n=3j^2+4j+1,\\
0,\hspace{2mm}\text{otherwise}. 
\end{cases}
\end{equation*}
\end{theorem}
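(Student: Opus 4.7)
The plan is to deduce Theorem~\ref{pti2} by interpreting both sides of the $q$-series identity
\[
\sum_{n=0}^{\infty} q^n (q^{n+1};q)_n (q^{2n+2};q^2)_{\infty} = \sum_{j=0}^{\infty} (-1)^j q^{j(3j+2)}(1+q^{2j+1})
\]
established immediately above, following the template used for Theorem~\ref{pti1}.

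For each $n \ge 1$ I would first expand the $n$-th summand of the LHS as a signed generating function over distinct partitions with smallest part $n$. The factor $q^n$ places the smallest part with weight $+1$; the factor $(q^{n+1};q)_n = \prod_{k=n+1}^{2n}(1-q^k)$ selects a distinct subset of parts from $\{n+1,\dots,2n\}$, each chosen part carrying a sign $-1$; and the factor $(q^{2n+2};q^2)_\infty = \prod_{k \ge n+1}(1-q^{2k})$ selects a distinct subset of even parts from $\{2n+2, 2n+4, \dots\}$, again each carrying a sign $-1$. Since no odd integer exceeding $2n-1$ is available in these two factors, every odd part of the resulting partition is strictly less than $2n$, that is, strictly less than twice the smallest part. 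The $n$-th summand thus equals $\sum_\lambda (-1)^{\ell(\lambda)-1}\,q^{|\lambda|}$ taken over the distinct partitions $\lambda$ of smallest part $n$ satisfying the theorem's restriction.

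The $n=0$ summand $(q^2;q^2)_\infty = \sum_\mu (-1)^{\ell(\mu)}\,q^{|\mu|}$ ranges over distinct partitions $\mu$ into positive even parts. Any nonempty such $\mu$ has an even smallest part $s \ge 2$ and is therefore also counted in the $n=s$ branch with the opposite sign $(-1)^{\ell(\mu)-1}$, so the two contributions cancel; only the empty partition's contribution $+1$ survives. Combining, the entire LHS equals
\[
1 + \sum_{\lambda} (-1)^{\ell(\lambda)-1}\,q^{|\lambda|},
\]
the sum being over distinct partitions $\lambda$ with at least one odd part and each odd part less than twice the smallest. Matching coefficients of $q^n$ against $\sum_j(-1)^j q^{j(3j+2)}(1+q^{2j+1})$ then yields $d_o(n)-d_e(n) = (-1)^j$ precisely when $n = 3j^2+2j$ or $n = 3j^2+4j+1$, and $0$ otherwise.

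The step that needs the most care is the cancellation for the $n=0$ summand against the all-even distinct partitions appearing in the $n \ge 1$ terms, together with tracking the sign convention $(-1)^{\ell-1}$; once that bookkeeping is verified, the theorem follows at once from the identity.
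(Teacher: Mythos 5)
Your reading of the identity is exactly the one the paper intends (the paper gives no separate argument for Theorem \ref{pti2}; it is stated as the combinatorial content of the identity proved just above it), and your bookkeeping is correct: the $n$-th summand is $\sum_\lambda(-1)^{\ell(\lambda)-1}q^{|\lambda|}$ over distinct partitions with smallest part $n$, parts in $(n,2n]$ unrestricted, and all parts $>2n$ even (equivalently all odd parts $<2n$), and the $n=0$ term $(q^2;q^2)_\infty$ cancels precisely the nonempty all-even partitions arising in the $n\ge 1$ branches, leaving $1+\sum(-1)^{\ell(\lambda)-1}q^{|\lambda|}$ over distinct partitions that contain at least one odd part. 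The one thing you should not pass over silently is that this last class is not literally the class in the statement: a partition with no odd parts satisfies ``each odd part is less than twice the smallest part'' vacuously (and the worked example after Theorem \ref{pti1} shows the authors do include all-even partitions in the analogous count there), so under the literal reading one has $d_o(2)-d_e(2)=1$ from $\{2\}$, $d_o(4)-d_e(4)=1$ from $\{4\}$, and in general the displayed formula is off at $n=2,4,10,14,\dots$ (the doubled pentagonal numbers) by exactly the coefficients of $1-(q^2;q^2)_\infty$ --- which is precisely the quantity your cancellation removed. So your derivation is sound and is, if anything, more careful than the paper's implicit one, but your final ``matching coefficients'' sentence establishes the theorem only under the convention that the partitions counted by $d_o(n)$ and $d_e(n)$ contain at least one odd part; state that proviso explicitly (it is what the identity actually supports, and the paper's wording glosses over it), rather than asserting that the literal statement follows.
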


\textbf{Remark.} As a side observation, we note that a similar treatment, as above, of the left-hand side of Theorem \ref{phiqthm} leads to the following result:
\begin{align*}
1- q \sum_{n=0}^{\infty} q^n (q^{n+1};q)_n (q^{2n+1};q^2)_{\infty} = (q;q^2)_{\infty}.
\end{align*}
This follows immediately when we observe that
\begin{align*}
1- q \sum_{n=0}^{\infty} q^n (q^{n+1};q)_n (q^{2n+1};q^2)_{\infty} &= 1- q (q;q^2)_{\infty} \sum_{n=0}^{\infty} \frac{q^n (q^{n+1};q)_n(q^2;q^2)_{n}}{(q;q)_{2n}}\\
&=1-(q;q^2)_{\infty}\sum_{n=1}^{\infty}(-q;q)_{n-1}q^n,
\end{align*}
and then use \eqref{seplarge} and \eqref{ei}.
\section{Congruences}\label{cong} 

Two new partition functions, namely $p_{\omega}(n)$ and $p_{\nu}(n)$, are introduced here. For any positive integer $n$, $p_{\omega}(n)$ counts the number of partitions in which all odd parts are less than twice the smallest part, and $p_{\nu}(n)$ counts the number of partitions in which the parts are distinct and all odd parts are less than twice the smallest part. Then it follows that
\begin{align*}
\sum_{n=1}^{\infty} p_{\omega}(n) q^n &=\sum_{n=1}^{\infty} \frac{q^n}{(1-q^n) (q^{n+1};q)_{n} (q^{2n+2};q^2)_{\infty}},\\
\sum_{n=1}^{\infty}  p_{\nu} (n) q^n & = \sum_{n=1}^{\infty}  q^{n} (-q^{n+1};q)_n (-q^{2n+2};q^2)_{\infty}. 
\end{align*}
We now define two analogues of ${\rm spt}(n)$. Denote by ${\rm spt}_{\omega}(n)$ and ${\rm spt}_{\nu}(n)$ the number of smallest parts in the partitions of $n$ enumerated by $p_{\omega}(n)$ and $p_{\nu}(n)$ repsectively.  Note that
 since the parts in partitions counted by $p_{\nu}(n)$ are all distinct,  $\text{spt}_{\nu}(n)=p_{\nu}(n)$.  Also, from the definition of ${\rm spt}_{\omega}(n)$,
\begin{align*}
\sum_{n=1}^{\infty} {\rm spt}_{\omega}(n) q^n &=\sum_{n=1}^{\infty} \frac{q^n}{(1-q^n)^2 (q^{n+1};q)_{n} (q^{2n+2};q^2)_{\infty}}. 
\end{align*}
In the following lemma, we first derive an alternative representation of the above generating function which will help us obtain congruences for $\textup{spt}_{\omega}(n)$. 
\begin{lemma}\label{con1}
The following identity holds:
\begin{align}\label{baileydiff}
\sum_{n=1}^{\infty} \frac{q^n}{(1-q^n)^2 (q^{n+1};q)_n (q^{2n+2};q^2)_{\infty}} 
=\frac{1}{(q^2;q^2)_{\infty}} \sum_{n=1}^{\infty} \frac{nq^n}{1-q^n} +\frac{1}{(q^2;q^2)_{\infty}} \sum_{n=1}^{\infty} \frac{(-1)^n (1+q^{2n}) q^{n(3n+1)}}{(1-q^{2n})^2}.
\end{align}
\end{lemma}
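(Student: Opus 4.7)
My plan is to reduce \eqref{baileydiff} to the auxiliary identity \eqref{baileydiff1} announced in the introduction by an elementary manipulation of Pochhammer symbols, and then to prove \eqref{baileydiff1} by the programme already flagged by the authors, namely a second differentiation of a specialization of Bailey's ${}_{10}\phi_9$-transformation \eqref{baitra}.

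For the reduction, combine $(q^{n+1};q)_n = (q;q)_{2n}/(q;q)_n$, $(q^{2n+2};q^2)_\infty = (q^2;q^2)_\infty/(q^2;q^2)_n$, and the splitting $(q;q)_{2n} = (q;q^2)_n (q^2;q^2)_n$ to rewrite the summand on the left of \eqref{baileydiff} as
$$\frac{q^n}{(1-q^n)^2 (q^{n+1};q)_n (q^{2n+2};q^2)_\infty} = \frac{1}{(q^2;q^2)_\infty}\cdot\frac{(q;q)_n\, q^n}{(q;q^2)_n(1-q^n)^2}.$$
Summing over $n\ge 1$, the left-hand side of \eqref{baileydiff} becomes $(q^2;q^2)_\infty^{-1}$ times the left-hand side of \eqref{baileydiff1}; the right-hand side of \eqref{baileydiff} is already $(q^2;q^2)_\infty^{-1}$ times the right-hand side of \eqref{baileydiff1}. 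Hence the lemma is equivalent to \eqref{baileydiff1}.

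To establish \eqref{baileydiff1}, I would mimic Andrews' classical derivation of the $\textup{spt}(n)$-identity. Choose the parameters $a,p_1,p_2,f$ in \eqref{baitra} so that three of them are fixed (in terms of $q$) and one is a free variable $z$, producing an identity $\Phi(z;q)=\Psi(z;q)$ that is analytic near $z=1$; the specialization must be arranged so that both sides, together with their first $z$-derivatives, agree trivially at $z=1$, pushing the content of the identity into the coefficient of $(z-1)^2$. Differentiating twice in $z$ and setting $z=1$ produces, on the left, the target series $\sum_{n\ge 1}(q;q)_n q^n/[(q;q^2)_n(1-q^n)^2]$. On the right, there are two contributions. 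Taking the logarithmic derivative of the infinite-product prefactor of \eqref{baitra}, using $\partial_z\log(a(z);q)_\infty=-\sum_{k\ge 0} a'(z)q^k/(1-a(z)q^k)$, collapses into the Eulerian sum $\sum_{n\ge 1} n q^n/(1-q^n)$. Differentiating inside the summation, after exposing the theta-like factor by the Jacobi triple-product identity \cite[p.~21, Theorem 2.8]{gea} and symmetrizing under $n\mapsto -n$ (which generates the factor $1+q^{2n}$ from the $\pm n$ pairing), delivers the pentagonal series $\sum_{n\ge 1}(-1)^n(1+q^{2n})q^{n(3n+1)}/(1-q^{2n})^2$.

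The main obstacle is twofold: first, pinning down the correct four-parameter specialization in \eqref{baitra} so that the identity is non-trivial precisely at second order in $(z-1)$; second, managing the combinatorial explosion of cross-terms produced by the second derivative, which must collapse cleanly into exactly the two sums on the right of \eqref{baileydiff1}. The critical consistency check is that under the chosen specialization the theta-like factor on the right of \eqref{baitra} reduces, before differentiation, to a series whose exponent is $n(3n+1)$; this is Euler's pentagonal-number identity with $q$ replaced by $q^2$, $\sum_{n\in\mathbb{Z}}(-1)^n q^{n(3n+1)}=(q^2;q^2)_\infty$, which is the combinatorial source of both the pentagonal exponent and the $\pm n$ symmetry. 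Once that specialization is verified, the remaining work is a tedious but mechanical expansion using the logarithmic-derivative rule above on each Pochhammer factor.
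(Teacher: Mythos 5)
Your reduction of \eqref{baileydiff} to \eqref{baileydiff1} is correct and is exactly the paper's closing step run in reverse (the paper multiplies \eqref{baileydiff1} by $1/(q^2;q^2)_{\infty}$ and simplifies the left side), and your overall strategy for \eqref{baileydiff1} --- specialize \eqref{baitra} so that one free variable $z$ remains, observe that both sides agree to first order at $z=1$, and extract the identity from $-\tfrac12\,d^2/dz^2$ at $z=1$ --- is precisely the paper's. But the proposal stops short of the one genuinely nontrivial input, which you yourself flag as an unresolved ``main obstacle'': the explicit specialization. The paper takes $a=1$, $p_1=z=p_2^{-1}$ and lets $f\to\infty$ in \eqref{baitra}, which yields
\begin{equation*}
\sum_{n=0}^{\infty}\frac{(z;q)_n(z^{-1};q)_nq^n}{(q;q)_n(q;q^2)_n}
=\frac{(zq;q)_{\infty}(z^{-1}q;q)_{\infty}}{(q;q)_{\infty}^2}\left(1+\sum_{n=1}^{\infty}\frac{(1-z)(1-z^{-1})(1+q^{2n})(-1)^nq^{n(3n+1)}}{(1-zq^{2n})(1-z^{-1}q^{2n})}\right),
\end{equation*}
and then applies the two differentiation facts from \cite[Equations (2.1), (2.4)]{gea266}, namely $-\tfrac{1}{2}\bigl[\tfrac{d^2}{dz^2}(1-z)(1-z^{-1})f(z)\bigr]_{z=1}=f(1)$ and $-\tfrac{1}{2}\bigl[\tfrac{d^2}{dz^2}(zq;q)_{\infty}(z^{-1}q;q)_{\infty}\bigr]_{z=1}=(q;q)_{\infty}^{2}\sum_{n\geq1}nq^n/(1-q^n)$. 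Without producing this specialization (or an equivalent one) the plan cannot be executed; finding it is the substance of the lemma beyond the hint already given in the introduction.

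A secondary point: your account of where the pentagonal exponents come from is misdirected. Under the paper's choice, the right-hand side of \eqref{baitra} becomes the $q$-series $\sum_{n}(z;q)_n(z^{-1};q)_nq^n/[(q;q)_n(q;q^2)_n]$, whose second derivative gives the target series since $(z;q)_n(z^{-1};q)_n=(1-z)(1-z^{-1})(zq;q)_{n-1}(z^{-1}q;q)_{n-1}$ and $(q;q)_{n-1}^2/(q;q)_n=(q;q)_n/(1-q^n)^2$; the Lambert-type series with exponent $n(3n+1)$, the factor $1+q^{2n}$, and the prefactor $(1-z)(1-z^{-1})$ comes from the very-well-poised ${}_{10}\phi_9$ side collapsing under $a=1$, $p_1p_2=1$, $f\to\infty$, and it is already present \emph{before} any differentiation. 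Consequently no Jacobi triple-product manipulation or $n\mapsto-n$ symmetrization after differentiating is required, and the ``combinatorial explosion of cross-terms'' you worry about does not occur: since $(1-z)(1-z^{-1})=-(1-z)^2/z$ vanishes to second order at $z=1$, the second derivative splits cleanly into the product-prefactor contribution (the Eulerian sum) and the termwise contribution from the Lambert series, with no mixed terms.
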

\begin{proof}
Let $a=1, p_1=z=p_2^{-1}$ and then let $f\to\infty$ in Bailey's ${}_{10}\phi_{9}$ transformation, namely \eqref{baitra}, to see that
\begin{align*}
\sum_{n=0}^{\infty}\frac{(z;q)_n(z^{-1};q)_nq^n}{(q;q)_n(q;q^2)_n}=\frac{(zq;q)_{\infty}(z^{-1}q;q)_{\infty}}{(q;q)_{\infty}^2}\left(1+\sum_{n=1}^{\infty}\frac{(1-z)(1-z^{-1})(1+q^{2n})(-1)^nq^{n(3n+1)}}{(1-zq^{2n})(1-z^{-1}q^{2n})}\right).
\end{align*} 
Now take the second derivative on both sides with respect to $z$, make use of the facts \cite[Equations (2.1), (2.4)]{gea266}
\begin{align*}
-\frac{1}{2}\left[\frac{d^2}{dz^2}(1-z)(1-z^{-1})f(z)\right]_{z=1}&=f(1),\\
-\frac{1}{2}\left[\frac{d^2}{dz^2}(zq;q)_{\infty}(z^{-1}q;q)_{\infty}\right]_{z=1}&=(q;q)_{\infty}^{2}\sum_{n=1}^{\infty}\frac{nq^n}{1-q^n},
\end{align*}
to deduce that
\begin{align*}
\sum_{n=1}^{\infty}\frac{(q;q)_nq^n}{(q;q^2)_n(1-q^n)^2}=\sum_{n=1}^{\infty}\frac{nq^n}{1-q^n}+\sum_{n=1}^{\infty}\frac{(1+q^{2n})(-1)^nq^{n(3n+1)}}{(1-q^{2n})^2}.
\end{align*}
Finally, multiply both sides of the above identity by $1/(q^2;q^2)_{\infty}$ and simplify the resulting left-hand side to arrive at the claimed result.
\end{proof}
We now use the above lemma to prove the next theorem.
\begin{theorem}\label{congsptomega1}
The following congruence holds:
\begin{align*}
\textup{spt}_{\omega} (5n+3) &\equiv 0 \pmod{5}.
\end{align*}
\end{theorem}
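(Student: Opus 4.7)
The plan is to combine Lemma~\ref{con1} with Andrews' classical congruence $\textup{spt}(5m+4)\equiv 0\pmod{5}$. By Lemma~\ref{con1},
\[
\sum_{n\geq 1}\textup{spt}_{\omega}(n)q^n \;=\; \frac{L(q)+T(q)}{(q^2;q^2)_{\infty}},
\]
where $L(q):=\sum_{n\geq 1}nq^n/(1-q^n)$ and $T(q):=\sum_{n\geq 1}(-1)^n(1+q^{2n})q^{n(3n+1)}/(1-q^{2n})^2$.

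The first observation is that Andrews' original identity \cite{gea266} for $\textup{spt}(n)$, after the substitution $q\mapsto q^2$, becomes
\[
\sum_{m\geq 1}\textup{spt}(m)q^{2m} \;=\; \frac{L(q^2)+T(q)}{(q^2;q^2)_{\infty}},
\]
since $T(q)$ is precisely the image of the pentagonal-type summand under that substitution. Subtracting cancels $T(q)$ and leaves
\[
\sum_{n\geq 1}\textup{spt}_{\omega}(n)q^n \;-\; \sum_{m\geq 1}\textup{spt}(m)q^{2m} \;=\; \frac{M(q)}{(q^2;q^2)_{\infty}},
\]
where $M(q):=L(q)-L(q^2)=\sum_{d\geq 1}dq^d/(1-q^{2d})$. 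The $q^{5n+3}$ coefficient of $\sum_{m}\textup{spt}(m)q^{2m}$ vanishes unless $5n+3$ is even, in which case $5n+3=10j+8$ and the coefficient equals $\textup{spt}(5j+4)$, which is divisible by $5$ by Andrews' congruence. So the target reduces to showing that $[q^{5n+3}]\,M(q)/(q^2;q^2)_{\infty}\equiv 0\pmod{5}$ for all $n\geq 0$.

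For this reduced statement I would apply Fermat, $(q^2;q^2)_{\infty}^{5}\equiv(q^{10};q^{10})_{\infty}\pmod{5}$, which gives
\[
(q^{10};q^{10})_{\infty}\cdot\frac{M(q)}{(q^2;q^2)_{\infty}} \;\equiv\; (q^2;q^2)_{\infty}^{4}M(q)\pmod{5}.
\]
Since $(q^{10};q^{10})_{\infty}$ is a power series in $q^{10}$, a strong induction on $n$ further reduces the task to
\[
[q^{5n+3}]\,(q^2;q^2)_{\infty}^{4}M(q)\;\equiv\;0\pmod{5}, \qquad n\geq 0.
\]
I would attack this by factoring $(q^2;q^2)_{\infty}^{4}=(q^2;q^2)_{\infty}\cdot(q^2;q^2)_{\infty}^{3}$, expanding the first factor via Euler's pentagonal number theorem and the second via Jacobi's identity $(q^2;q^2)_{\infty}^{3}=\sum_{k\geq 0}(-1)^k(2k+1)q^{k(k+1)}$, and expanding $M(q)=\sum_{N\geq 1}\sigma^{\ast}(N)q^N$ with $\sigma^{\ast}(N):=\sum_{d\mid N,\;N/d\text{ odd}}d$.

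The main obstacle is the final mod-$5$ verification: showing that the resulting triple sum has no contribution at exponents $\equiv 3\pmod{5}$. A natural path is to imitate Andrews' argument for $\textup{spt}(5n+4)\equiv 0\pmod{5}$, transporting the known $5$-dissection of $(q;q)_{\infty}^{4}L(q)$ (obtained from Ramanujan's identities for $(q;q)_{\infty}^{4}$ together with the Lambert-series expression for $L$) to the $q\mapsto q^2$ setting so as to control $(q^2;q^2)_{\infty}^{4}M(q)$. Alternatively, one could appeal to Garvan's spt-crank dissection machinery to bypass an explicit dissection.
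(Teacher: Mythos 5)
Your reductions are fine and run parallel to the paper's: Lemma~\ref{con1} splits the generating function, and your use of Andrews' spt-identity under $q\mapsto q^2$ to cancel $T(q)$ and invoke $\textup{spt}(5m+4)\equiv 0\pmod 5$ plays exactly the role that the paper's citation of $-\tfrac12\sum N_2(n)q^{2n}$ and $N_2(5m+4)\equiv 0\pmod 5$ plays (both facts come from the same reference \cite{gea266}), and your Fermat-plus-induction step correctly reduces matters to a coefficient statement about $(q^2;q^2)_{\infty}^{4}M(q)$. But the proof stops exactly where the real work begins: you yourself label the mod-$5$ verification of
\begin{equation*}
[q^{5n+3}]\,(q^2;q^2)_{\infty}^{4}M(q)\equiv 0\pmod 5
\end{equation*}
as ``the main obstacle'' and offer only speculative strategies (imitate Andrews, transport a $5$-dissection, invoke Garvan's machinery) without carrying any of them out. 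This is the heart of the theorem, and the plan you do sketch --- expanding $M(q)=\sum_N\sigma^{\ast}(N)q^N$ against the pentagonal and Jacobi expansions of $(q^2;q^2)_{\infty}^{4}$ --- is unlikely to close on its own, since $\sigma^{\ast}(N)\bmod 5$ has no usable structure in such a raw triple sum.

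The paper's missing ingredient is a specific identity that converts the Lambert series into theta-type data: from Jacobi's identity, $q\frac{d}{dq}(q;q)_{\infty}^{3}=-3(q;q)_{\infty}^{3}\sum_{n\geq1}\frac{nq^{n}}{1-q^{n}}$, so that
\begin{equation*}
\frac{1}{(q^2;q^2)_{\infty}}\sum_{n\geq1}\frac{nq^{n}}{1-q^{n}}
=\frac{-1}{3(q;q)_{\infty}^{5}}\left(\sum_{n=-\infty}^{\infty}(-1)^{n}q^{n^{2}}\right)\left(\sum_{j\geq0}(-1)^{j}(2j+1)\binom{j+1}{2}q^{\binom{j+1}{2}}\right),
\end{equation*}
using $(q;q)_{\infty}/(-q;q)_{\infty}=\sum(-1)^nq^{n^2}$; then $(q;q)_{\infty}^{5}\equiv(q^{5};q^{5})_{\infty}\pmod 5$ and a short residue check (exponent $\equiv 3\pmod 5$ forces $n^{2}\equiv 0$ and $\binom{j+1}{2}\equiv 3$, hence $j\equiv 2$ and $2j+1\equiv 0\pmod 5$) finishes the argument, and incidentally also yields the $5n+4$ case. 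Without this derivative-of-$(q;q)_{\infty}^{3}$ device (or an equivalent explicit dissection), your proposal does not constitute a proof; it is a correct reduction with the decisive congruence left unproved.
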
 
\begin{proof}
In \cite[Equation (3.4)]{gea266}, the second expression on the right hand side of \eqref{baileydiff} was shown to be 
\begin{align*}
-\frac{1}{2} \sum_{n=0}^{\infty} N_2(n) q^{2n}.
\end{align*}
Additionally \cite[p.~139]{gea266}
\begin{align*}
N_2(n)\equiv 0 \pmod{5}
\end{align*}
if $n\equiv 4 \pmod{5}$, and  equivalently if $2n \equiv 3 \pmod{5}.$
Hence, to prove that 
\begin{align*}
\text{spt}_{\omega} (5n+3) \equiv 0 \pmod{5},
\end{align*}
it suffices to show that the coefficients of $q^{5n+3}$ in 
\begin{align*}
\frac{1}{(q^2;q^2)_{\infty}} \sum_{n=1}^{\infty} \frac{nq^n}{1-q^n}
\end{align*}
are divisible by $5$. 
This is one of the objectives of the following theorem.
\end{proof}

\begin{theorem}
Let 
\begin{align*}
\sum_{n=0}^{\infty} c_n q^n := \frac{1}{(q^2;q^2)_{\infty}} \sum_{n=1}^{\infty} \frac{nq^n}{1-q^n}.
\end{align*}
Then $5\mid c_{5n+3}$ and $5\mid c_{5n+4}$.
\end{theorem}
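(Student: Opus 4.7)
The plan is to reduce the claim to an arithmetic fact about Euler's pentagonal exponents, then transfer that fact to $G(q):=\sum c_n q^n$ modulo $5$ by a Frobenius-style multiplication. First, using the logarithmic-derivative identity $\sum_{n\geq 1}\frac{nq^n}{1-q^n}=-q(q;q)_\infty'/(q;q)_\infty$, the definition of $c_n$ becomes
\[
(q;q)_\infty(q^2;q^2)_\infty\,G(q) \;=\; -q(q;q)_\infty' \;=\; -\sum_{k\in\mathbb{Z}}(-1)^k\,\tfrac{k(3k-1)}{2}\,q^{k(3k-1)/2},
\]
by Euler's pentagonal number theorem. The crucial observation is that the pentagonal exponents $k(3k-1)/2$ \emph{never} reduce to $3$ or $4$ modulo $5$: checking $k\equiv 0,1,2,3,4\pmod{5}$ yields exponents $\equiv 0,1,0,2,2\pmod{5}$ respectively. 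Thus the product $(q;q)_\infty(q^2;q^2)_\infty\,G(q)$ has identically vanishing $q^{5M+3}$ and $q^{5M+4}$ coefficients, not merely modulo $5$.

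Next I would lift this to a congruence involving only $q^5$-series factors via the Fermat-Frobenius relations $(q;q)_\infty^{5}\equiv(q^5;q^5)_\infty$ and $(q^2;q^2)_\infty^{5}\equiv(q^{10};q^{10})_\infty\pmod{5}$. Multiplying the previous equality by $(q;q)_\infty^{4}(q^2;q^2)_\infty^{4}$ produces
\[
(q^5;q^5)_\infty(q^{10};q^{10})_\infty\,G(q) \;\equiv\; -q(q;q)_\infty'(q;q)_\infty^{4}(q^2;q^2)_\infty^{4}\pmod{5}.
\]
Because $(q^5;q^5)_\infty(q^{10};q^{10})_\infty\in\mathbb{Z}[[q^5]]$ has constant term $1$, it is invertible modulo $5$ and acts within each residue class of the $5$-dissection of $G(q)$. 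Consequently the congruences $c_{5M+3}\equiv c_{5M+4}\equiv 0\pmod{5}$ reduce to the same vanishing for the right-hand side modulo $5$.

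The main obstacle is establishing this final vanishing. A promising approach is to use $q(q;q)_\infty^{4}(q;q)_\infty'=\tfrac{q}{5}\frac{d}{dq}(q;q)_\infty^{5}$ with the integer decomposition $(q;q)_\infty^{5}=(q^5;q^5)_\infty+5R(q)$. The right-hand side then splits into a principal piece---a pure series in $q^5$ arising from Euler's pentagonal theorem applied to $(q^5;q^5)_\infty$---multiplied by $(q^2;q^2)_\infty^{4}$, plus a remainder $qR'(q)(q^2;q^2)_\infty^{4}$. The principal piece is tractable by combining Jacobi's triple product $(q^2;q^2)_\infty^{3}=\sum_{n\geq 0}(-1)^n(2n+1)q^{n(n+1)}$ with the $5$-dissection of $(q^2;q^2)_\infty$ itself (whose exponents $k(3k-1)$ modulo $5$ lie in $\{0,2,4\}$), narrowing which residue classes contribute to the $q^{5M+3}, q^{5M+4}$ positions. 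The subtlest point is the remainder: it requires an explicit determination of $R(q)\pmod{5}$, most naturally via Ramanujan's $5$-dissection of $(q;q)_\infty^{-1}$, and verification that its contribution to residues $3$ and $4$ also vanishes modulo $5$.
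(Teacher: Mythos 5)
Your opening reduction is valid but it only restates the theorem: since $(q^5;q^5)_\infty(q^{10};q^{10})_\infty$ is an invertible power series in $q^5$ modulo $5$, the claims $5\mid c_{5M+3}$, $5\mid c_{5M+4}$ are \emph{equivalent} to the vanishing modulo $5$ of the coefficients of $-q(q;q)_\infty'(q;q)_\infty^4(q^2;q^2)_\infty^4$ in residue classes $3$ and $4$, and that is exactly where your argument stops being a proof. Moreover, the route you sketch for this final step cannot be completed as described. Writing $(q;q)_\infty^5=(q^5;q^5)_\infty+5R(q)$ and splitting $-\tfrac{q}{5}\tfrac{d}{dq}(q;q)_\infty^5\cdot(q^2;q^2)_\infty^4$ into the principal piece $-\tfrac{q}{5}\tfrac{d}{dq}(q^5;q^5)_\infty\cdot(q^2;q^2)_\infty^4$ and the remainder $-qR'(q)(q^2;q^2)_\infty^4$, the two pieces do \emph{not} separately have class-$4$ coefficients divisible by $5$: for instance at $q^{9}$ the principal piece contributes $2$ (it is $q^5$ times the coefficient $2$ of $q^4$ in $(q^2;q^2)_\infty^4$), while the remainder contributes $133$; only the sum $135$ is $\equiv 0\pmod 5$. (For class $3$ the principal piece does vanish on its own, because any exponent $n(n+1)+k(3k-1)\equiv 3\pmod 5$ forces $n\equiv 2\pmod 5$ and hence $5\mid(2n+1)$ in Jacobi's expansion of $(q^2;q^2)_\infty^3$; but class $4$ genuinely requires cancellation between your two pieces, so your plan of ``verify that each contribution vanishes'' is doomed for that class.) Thus the crucial divisibility is left unproved, and the proposed completion would fail.

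For contrast, the paper builds the needed divisibility into the coefficients before any dissection issue arises: by Jacobi's identity, $\sum_{n\ge1} nq^n/(1-q^n)=-\tfrac13(q;q)_\infty^{-3}\sum_{j\ge0}(-1)^j(2j+1)\binom{j+1}{2}q^{\binom{j+1}{2}}$, and $1/(q^2;q^2)_\infty=(q;q)_\infty^{-2}\sum_{n\in\mathbb{Z}}(-1)^nq^{n^2}$, so that modulo $5$ the generating function is $-\tfrac13(q^5;q^5)_\infty^{-1}$ times the product of these two theta-type sums. Since $n^2\equiv 0,1,4$ and $\binom{j+1}{2}\equiv 0,1,3\pmod 5$, every way of reaching exponent residue $3$ or $4$ forces either $\binom{j+1}{2}\equiv 0$ or $j\equiv 2\pmod 5$, and in both cases the coefficient factor $(2j+1)\binom{j+1}{2}$ is divisible by $5$. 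The extra factor $\binom{j+1}{2}$ produced by differentiating the \emph{cube} of $(q;q)_\infty$ is precisely what your pentagonal-derivative version lacks once the auxiliary powers $(q;q)_\infty^4(q^2;q^2)_\infty^4$ enter; replacing your final step by this argument would repair the proof.
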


\proof
Using Jacobi's identity for $(q;q)_{\infty}^3$ \cite[p.~176]{gea}, we see that
\begin{align*}
\sum_{n=0}^{\infty} (-1)^n (2n+1) \binom{n+1}{2} q^{\binom{n+1}{2}}&=q\frac{d}{dq} (q;q)_{\infty}^3\\
&=-3 (q;q)_{\infty}^3 \sum_{n=1}^{\infty} \frac{nq^n}{1-q^n}.
\end{align*}
Hence
\begin{align*}
\frac{1}{(q^2;q^2)_{\infty}} \sum_{n=1}^{\infty} \frac{nq^n}{1-q^n} &= \frac{1}{(q;q)_{\infty} (-q;q)_{\infty}} \left(\frac{-1}{3(q;q)_{\infty}^3} \sum_{j=0}^{\infty} (-1)^j (2j+1) \binom{j+1}{2} q^{\binom{j+1}{2}}\right)\\
&=\frac{-1}{3 (q;q)_{\infty}^5} \sum_{n=-\infty}^{\infty} (-1)^n q^{n^2} \sum_{j=0}^{\infty} (-1)^j (2j+1) \binom{j+1}{2} q^{\binom{j+1}{2}}\\
&\equiv \frac{-1}{3(q^5;q^5)_{\infty}}\sum_{n=-\infty}^{\infty} (-1)^n q^{n^2} \sum_{j=0}^{\infty} (-1)^j (2j+1) \binom{j+1}{2} q^{\binom{j+1}{2}}  \pmod{5},
\end{align*}
where in the second step we used \cite[p.~23, Equation (2.2.12)]{gea}.
So we need to know when is 
\begin{align*}
n^2+\binom{j+1}{2}\equiv 3 \pmod{5}.
\end{align*}
Since $n^2\equiv 0,1,$ or $4 \pmod{5}$, and $\binom{j+1}{2}\equiv 0,1$ or $3\pmod{5}$, the only way we can get $3$ mod $5$ is for $n^2\equiv 0$ and $\binom{j+1}{2}\equiv 3 \pmod{5}$. Now
$\binom{j+1}{2} \equiv 3 \pmod{5}$ if and only if $j\equiv 2 \pmod{5}$ and then 
\begin{align*}
(2j+1)\binom{j+1}{2} \equiv 0 \pmod{5}.
\end{align*}
Hence $5\mid c_{5n+3}$.

Second, we need to see when is
\begin{align*}
n^2+\binom{j+1}{2}\equiv 4 \pmod{5}.
\end{align*}
There are two possibilities here - either $n^2\equiv 1$ and $\binom{j+1}{2} \equiv 3 \pmod{5}$ or $n^2\equiv 4 \pmod{5}$ and $\binom{j+1}{2} \equiv 0 \pmod{5}$. The first one requires $j\equiv 2 \pmod{5}$. Consequently in both,
\begin{align*}
(2j+1) \binom{j+1}{2} \equiv 0 \pmod{5}.
\end{align*}
This proves that $5\mid c_{5n+4}$.
\endproof
We now prove two other congruences satisfied by $\textup{spt}_{\omega}(n)$.
\begin{theorem}\label{congsptomega2}
The following congruences hold:
\begin{align*}
\textup{spt}_{\omega} (10n+7) &\equiv 0 \pmod{5},\nonumber\\
\textup{spt}_{\omega} (10n+9) &\equiv 0 \pmod{5}.
\end{align*}
\end{theorem}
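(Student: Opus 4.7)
The plan is to isolate the odd-indexed coefficients of $\sum_n c_n q^n$ from Lemma~\ref{con1}, exhibit a clean closed form for that generating function using a classical identity of Jacobi, and then extract both congruences from a single Freshman's-Dream reduction modulo $5$.

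First I would observe that the second series on the right-hand side of~\eqref{baileydiff} is a power series in $q^2$ (both $n(3n+1)$ and $n(3n+3)$ are even, and $(1-q^{2n})^{-2}/(q^2;q^2)_\infty$ contributes only even exponents), so it vanishes in every odd degree of $q$. Hence $\textup{spt}_\omega(n)=c_n$ whenever $n$ is odd, and since $10n+7$ and $10n+9$ are both odd it suffices to show $5\mid c_{10n+7}$ and $5\mid c_{10n+9}$. Expanding $1/(q^2;q^2)_\infty=\sum_{m\ge 0}p(m)q^{2m}$ and $\sum_{k\ge 1} kq^k/(1-q^k)=\sum_{N\ge 1}\sigma(N)q^N$, only odd $N$ contribute to odd powers of $q$, which gives $c_{2k+1}=\sum_{\ell=0}^{k}\sigma(2\ell+1)\,p(k-\ell)$, i.e.,
\begin{equation*}
\sum_{k\ge 0}c_{2k+1}\,q^k=\frac{A(q)}{(q;q)_\infty},\qquad A(q):=\sum_{\ell\ge 0}\sigma(2\ell+1)\,q^\ell.
\end{equation*}

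The key step is the classical identity $\psi(q)^4=\sum_{\ell\ge 0}\sigma(2\ell+1)q^\ell$ (Jacobi's formula for four triangular numbers), i.e., $A(q)=\psi(q)^4$. Combining this with $\psi(q)=(q^2;q^2)_\infty^2/(q;q)_\infty$, an immediate consequence of~\eqref{rampsi} and~\eqref{ei}, produces the closed form
\begin{equation*}
\sum_{k\ge 0}c_{2k+1}\,q^k=\frac{(q^2;q^2)_\infty^8}{(q;q)_\infty^5}.
\end{equation*}
Reducing modulo $5$ via the Freshman's Dream, $(q;q)_\infty^5\equiv(q^5;q^5)_\infty$ and $(q^2;q^2)_\infty^5\equiv(q^{10};q^{10})_\infty\pmod{5}$, this becomes
\begin{equation*}
\sum_{k\ge 0}c_{2k+1}\,q^k\equiv\frac{(q^{10};q^{10})_\infty\,(q^2;q^2)_\infty^3}{(q^5;q^5)_\infty}\pmod{5}.
\end{equation*}
On the right, $(q^{10};q^{10})_\infty$ and $1/(q^5;q^5)_\infty$ contribute only exponents divisible by $5$, while Jacobi's triple product gives $(q^2;q^2)_\infty^3=\sum_{j\ge 0}(-1)^j(2j+1)q^{j(j+1)}$, and a direct check shows $j(j+1)\bmod 5\in\{0,1,2\}$. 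Hence every exponent of $q$ appearing on the right is congruent to $0$, $1$, or $2$ modulo $5$, which forces $c_{2k+1}\equiv 0\pmod{5}$ for $k\equiv 3$ or $4\pmod{5}$. Specializing to $k=5n+3$ and $k=5n+4$ yields the two asserted congruences.

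The main obstacle is spotting the identification $A(q)=\psi(q)^4$: with that classical identity in hand the congruences drop out of a two-line dissection, but without it the sequence $\sigma(2\ell+1)$ looks arithmetically irregular. Notably, directly mimicking the proof of Theorem~\ref{congsptomega1} (dissecting the right-hand side of~\eqref{baileydiff} modulo $5$) only recovers $5\mid c_{5m+3}$ and $5\mid c_{5m+4}$; the latter yields $5\mid c_{10n+9}$ on setting $5m+4=10n+9$, but leaves $c_{10n+7}$ untouched because $10n+7\equiv 2\pmod{5}$ lies outside both residue classes covered there.
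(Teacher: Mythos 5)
Your proof is correct and follows essentially the same route as the paper: after discarding the even-exponent piece of \eqref{baileydiff}, your key identity $A(q)=\psi(q)^4$ is precisely the identity the paper imports from Fine's (32.31) (Jacobi's four-triangular-number theorem, written in the variable $q^2$ there), and your subsequent fifth-power reduction modulo $5$, use of Jacobi's cube identity, and residue analysis of $j(j+1)\pmod{5}$ mirror the paper's analysis of $2n^2+2n+1\pmod{5}$. The only differences are cosmetic: you work in the halved variable and, since you do not pursue the bonus congruence for $10n+3$, the exponent analysis alone suffices and you can skip the paper's extra observation that $(2j+1)\equiv 0\pmod{5}$ in the offending residue class.
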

\begin{proof}
We again use \eqref{baileydiff} to prove the congruences. Since $2n\not\equiv 7, 9\hspace{1mm}(\text{mod}\hspace{1mm}10)$ for any $n$, there is no contribution 
from the generating function of $-\frac{1}{2}N_{2}(n)$. Hence our objective is only to prove that the coefficients of $q^{10n+7}$ and $q^{10n+9}$ in
\begin{align*}
\frac{1}{(q^2;q^2)_{\infty}} \sum_{n=1}^{\infty} \frac{nq^n}{1-q^n}
\end{align*}
are divisible by $5$. Since $10n+7$ and $10n+9$ are both odd, we only need to consider
\begin{align*}
\frac{1}{(q^2;q^2)_{\infty}} \sum_{n=0}^{\infty} \frac{(2n+1)q^{2n+1}}{1-q^{2n+1}}=: \sum_{n=0}^{\infty} d_n q^n.
\end{align*}
Now in  \cite[p.~79, Equation (32.31)]{fine}, we find
\begin{align*}
\sum_{n=0}^{\infty} \frac{(2n+1) q^{2n+1}}{1-q^{2n+1}} =\frac{q(q^4;q^4)^8_{\infty}}{(q^2;q^2)_{\infty}^4}.
\end{align*}
Hence
\begin{align*}
\sum_{n=0}^{\infty} d_n q^n &=q\left(\frac{(q^4;q^4)_{\infty}}{(q^2;q^2)_{\infty}}\right)^5 (q^4;q^4)_{\infty}^3\\
& \equiv q \frac{(q^{20};q^{20})_{\infty}}{(q^{10};q^{10})_{\infty}} \sum_{n=0}^{\infty} (-1)^n (2n+1) q^{2n^2+2n} \pmod{5}
\end{align*}
Now mod $5$, 
\begin{align*}
\begin{array}{c|c}
n & 2n^2+2n+1\\ \hline
0 & 1\\
1 & 0\\
2& 3\\
3 & 0\\
4& 1
\end{array}
\end{align*}
Thus we immediately see that $d_{5n+2}$ and $d_{5n+4}$ are divisible by $5$. Finally also $d_{5n+3}$ is divisible by $5$  because $3 \pmod{5}$ arises only for $n\equiv 2 \pmod{5}$, but then $(2n+1)\equiv 0 \pmod{5}$. Thus we have proved:

\begin{theorem}
$5 \mid d_n$ for $n\equiv 2, 3, 4 \pmod{5}.$
\end{theorem}

\begin{corollary} 
$5\mid d_{10n+7}, 5 \mid d_{10n+3}, 5 \mid d_{10n+9}$.
\end{corollary}

This then proves, in particular, that $5\mid \text{spt}_{\omega} (10n+7)$ and $5\mid \text{spt}_{\omega} (10n+9)$.
\end{proof}

Our last congruence is for the smallest parts function associated with $p_{\nu}(n)$, and is given in the following theorem.
\begin{theorem}
We have
\begin{align*}
{\rm spt}_{\nu}(10n+8)\equiv 0 \pmod{5}. 
\end{align*}
\end{theorem}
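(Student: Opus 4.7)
The plan is to mirror the strategy used for the three $\textup{spt}_\omega$ congruences proved just above. First, combining the generating-function identity $\sum_{n\ge 1}\textup{spt}_\nu(n)q^n=\nu(-q)-(-q^2;q^2)_\infty$ with the decomposition \eqref{xi} obtained en route to Theorem~\ref{nuqthm}, namely $\nu(-q)=q\,\omega(q^2)+(-q^2;q^2)_\infty\psi(q^2)$, one has
\[
\sum_{n\ge1}\textup{spt}_\nu(n)\,q^n \;=\; q\,\omega(q^2)+(-q^2;q^2)_\infty\bigl(\psi(q^2)-1\bigr).
\]
The summand $q\,\omega(q^2)$ contains only odd powers of $q$, so it contributes nothing to $[q^{10n+8}]$. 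After replacing $q^2$ by $q$ in the remaining piece, the assertion reduces to
\[
[q^{5n+4}]\,G(q)\equiv 0\pmod 5,\qquad G(q):=(-q;q)_\infty\bigl(\psi(q)-1\bigr).
\]

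Next, I would recast $G(q)$ in a theta-quotient form better suited to a modulo-$5$ analysis. Using the Jacobi triple-product consequence $\psi(q)=(-q;q)_\infty(q^2;q^2)_\infty$, one obtains
\[
G(q)=\frac{(q^2;q^2)_\infty^{3}}{(q;q)_\infty^{2}}-\frac{(q^2;q^2)_\infty}{(q;q)_\infty}.
\]
Applying the Freshman's-Dream congruences $(q;q)_\infty^{5}\equiv (q^5;q^5)_\infty$ and $(q^2;q^2)_\infty^{5}\equiv (q^{10};q^{10})_\infty\pmod 5$, one then produces a representation of $G(q)$ mod~$5$ whose building blocks are theta series for which the supported residue classes modulo~$5$ can be read off explicitly. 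In particular, Jacobi's identity $(q^2;q^2)_\infty^{3}=\sum_{n\ge 0}(-1)^n(2n+1)q^{n(n+1)}$ shows that the support of $(q^2;q^2)_\infty^{3}$ lies in $\{0,1,2\}\pmod 5$, while Euler's pentagonal-number theorem shows that $(q^{10};q^{10})_\infty$ is supported on multiples of $10$; hence $(q^{10};q^{10})_\infty-(q^2;q^2)_\infty^{3}$ is supported off the residue class $4\pmod 5$.

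The third and final step is to combine this support information with an input that controls the remaining $1/(q;q)_\infty^{2}$-type factor. A cleaner route, and the one I would ultimately pursue as it parallels Lemma~\ref{con1} most closely, is to produce a direct analogue of the identity \eqref{baileydiff} by a suitably chosen specialization and differentiation of Bailey's ${}_{10}\phi_9$-transformation~\eqref{baitra}: express $G(q)$ as a sum of a Lambert-type series and a second rank-moment-type generating function. The divisibility of the latter piece at $q^{5n+4}$ would then follow from Andrews' congruence $N_2(n)\equiv 0\pmod 5$ for $n\equiv 4\pmod 5$, already invoked in the proof of Theorem~\ref{congsptomega1}, while the Lambert piece is handled by an arithmetic argument of the same flavour as in the proof of the first $\textup{spt}_\omega$ congruence.

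The main obstacle is precisely this last step. The theta-quotient analysis gets one to a convolution of two series of known support modulo~$5$, but the actual mod-$5$ cancellation does not fall out mechanically from the individual ingredients; and identifying the correct specialization of \eqref{baitra} whose derivative targets $G(q)$ rather than the $\textup{spt}_\omega$-generating function is the delicate point. Once either such identity is in hand, however, the conclusion would be a formal consequence of the machinery already developed in the paper.
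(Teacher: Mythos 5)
Your reduction is where the argument breaks. The paper's proof does not subtract the $n=0$ term: it reads $\mathrm{spt}_{\nu}(10n+8)$ as the coefficient of $q^{10n+8}$ in the full sum $\sum_{n\ge 0} q^n(-q^{n+1};q)_n(-q^{2n+2};q^2)_{\infty}=\nu(-q)$ (in effect allowing the smallest part to be zero, so that the $n=0$ term $(-q^2;q^2)_{\infty}$ is kept; this is also exactly what the closing Corollary about $\nu(-q)$ records), and by \eqref{thm1.1} it then only has to treat $(-q^2;q^2)_{\infty}\psi(q^2)$, the odd series $q\,\omega(q^2)$ being irrelevant. With your subtraction the problem becomes $[q^{5n+4}]\,(-q;q)_{\infty}\bigl(\psi(q)-1\bigr)\equiv 0\pmod 5$, and that statement is false: the paper's computation gives $[q^{5n+4}]\,(-q;q)_{\infty}\psi(q)\equiv 0\pmod 5$, while $[q^{5n+4}]\,(-q;q)_{\infty}$ counts partitions of $5n+4$ into distinct parts, which equals $2$ for $n=0$ and is not in general a multiple of $5$. (Concretely, the partitions of $8$ into distinct parts with every odd part less than twice the smallest are just $8$, $6+2$, $5+3$.) So no amount of additional machinery can close your reduced claim; the object to analyse is $(-q^2;q^2)_{\infty}\psi(q^2)$ itself, not $(-q^2;q^2)_{\infty}\bigl(\psi(q^2)-1\bigr)$.

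Even for the correct target, your plan stops short of the decisive step and reaches for the wrong tools: no differentiated specialization of \eqref{baitra}, no Lambert-series analogue of Lemma~\ref{con1}, and no input from the $N_2(n)$ congruences is needed or used. The paper simply writes, via \eqref{ei} and \eqref{rampsi},
\begin{equation*}
(-q^2;q^2)_{\infty}\psi(q^2)=\frac{(q^4;q^4)_{\infty}^{3}(q^2;q^2)_{\infty}^{3}}{(q^2;q^2)_{\infty}^{5}}\equiv\frac{(q^4;q^4)_{\infty}^{3}(q^2;q^2)_{\infty}^{3}}{(q^{10};q^{10})_{\infty}}\pmod{5},
\end{equation*}
expands both cubes by Jacobi's identity as $\sum_{m\ge0}(-1)^m(2m+1)q^{2m(m+1)}$ and $\sum_{j\ge0}(-1)^j(2j+1)q^{j(j+1)}$, and does a residue count: since $j(j+1)\equiv 0,2,6\pmod{10}$ and $2m(m+1)\equiv 0,2,4\pmod{10}$, an exponent $\equiv 8\pmod{10}$ forces $j\equiv m\equiv 2\pmod 5$, whence $(2j+1)(2m+1)\equiv 0\pmod 5$; the factor $1/(q^{10};q^{10})_{\infty}$ only shifts exponents by multiples of $10$. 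The ``cancellation that does not fall out mechanically'' in your write-up is an artifact of the incorrect subtraction; for the right series the proof is precisely this mechanical computation, and that key argument is missing from your proposal.
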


\begin{proof}
Since ${\rm spt}_{\nu}(n)$ equals $p_{\nu}(n)$ and thus ${\rm spt}_{\nu}(10n+8)$ is the coeffcient of $q^{10n+8}$ in the generating function 
\begin{equation*}
\sum_{n=0}^{\infty} q^n (-q^{n+1};q)_n (-q^{2n+2};q^2)_{\infty},
\end{equation*}
in view of \eqref{thm1.1}, it suffices to consider the contribution from $(-q^2;q^2)_{\infty} \psi(q^2)$ only. Now
\begin{align*}
(-q^2;q^2)_{\infty} \psi(q^2)&=(-q^2;q^2)_{\infty} \frac{(q^4;q^4)_{\infty}}{(q^2;q^4)_{\infty}}\\
&=(-q^2;q^2)_{\infty}^3 (q^2;q^2)_{\infty}\\
&=\frac{(-q^2;q^2)_{\infty}^3 (q^2;q^2)_{\infty}^6}{(q^2;q^2)_{\infty}^5}\\
&=\frac{(q^4;q^4)_{\infty}^3 (q^2;q^2)_{\infty}^3}{(q^2;q^2)_{\infty}^5}\\
&\equiv \frac{(q^4;q^4)_{\infty}^3 (q^2;q^2)_{\infty}^3}{(q^{10};q^{10})_{\infty}} \pmod{5}
\end{align*}
By Jacobi's identity, 
\begin{align*}
(q^4;q^4)_{\infty}^3 (q^2;q^2)_{\infty}^3=\sum_{m=0}^{\infty} (-1)^m (2m+1)q^{2m(m+1)} \sum_{j=0}^{\infty} (-1)^j (2j+1)q^{j(j+1)}.
\end{align*}
Since
$j(j+1)\equiv 0, 2, 6 \pmod{10}$ and $2m(m+1)\equiv 0, 4, 2 \pmod{10}$, we get 
\begin{align*}
j(j+1)+2m(m+1)\equiv 8 \pmod{10}  &\quad  \text{ iff } j(j+1)\equiv 6, 2m(m+1)\equiv 2 \pmod{10}\\
& \quad  \text{ iff } j\equiv 2, m\equiv 2 \pmod{10}.
\end{align*}
So for $j,m \equiv 2 \pmod{10}$,
\begin{align*}
(2m+1)(2j+1)\equiv 0 \pmod{5},
\end{align*}
which completes the proof.
\end{proof}
As a result of the above theorem, we obtain the following corollary. 
\begin{corollary}
The coefficient of $q^{10n+8}$ in $\nu(-q)$  is divisible by $5$. 
\end{corollary}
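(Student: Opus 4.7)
The plan is to derive this corollary directly from the identity in Theorem~\ref{nuqthm} together with a short parity observation; after that, the congruence reduces to exactly the computation already carried out in the preceding theorem. By Theorem~\ref{nuqthm},
\[
\nu(-q) \;=\; q\,\omega(q^2) \;+\; (-q^2;q^2)_\infty\,\psi(q^2).
\]
The summand $q\,\omega(q^2)$ is $q$ times a power series in $q^2$, hence it supports only odd exponents of $q$ and contributes nothing to the coefficient of $q^{10n+8}$. Consequently
\[
[q^{10n+8}]\,\nu(-q) \;=\; [q^{10n+8}]\bigl((-q^2;q^2)_\infty\,\psi(q^2)\bigr),
\]
and it suffices to show that the right-hand side is divisible by $5$.

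Next, I would recycle the manipulation from the previous theorem's proof. Using Euler's identity~\eqref{ei} and the congruence $(1-q^{2n})^5 \equiv (1-q^{10n}) \pmod{5}$ applied termwise, one rewrites
\[
(-q^2;q^2)_\infty\,\psi(q^2) \;=\; \frac{(q^4;q^4)_\infty^3\,(q^2;q^2)_\infty^3}{(q^2;q^2)_\infty^5} \;\equiv\; \frac{(q^4;q^4)_\infty^3\,(q^2;q^2)_\infty^3}{(q^{10};q^{10})_\infty} \pmod{5}.
\]
Since $1/(q^{10};q^{10})_\infty$ is a power series in $q^{10}$, the problem reduces to showing that the coefficient of $q^{10m+8}$ in $(q^4;q^4)_\infty^3\,(q^2;q^2)_\infty^3$ is divisible by $5$ for every $m \geq 0$.

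Two applications of Jacobi's identity for $(q;q)_\infty^3$ then express the numerator as the double sum
\[
(q^4;q^4)_\infty^3\,(q^2;q^2)_\infty^3 \;=\; \sum_{m,j \geq 0}(-1)^{m+j}(2m+1)(2j+1)\,q^{\,2m(m+1)+j(j+1)}.
\]
The only nontrivial step is the same modular bookkeeping already done in the preceding theorem's proof: one checks that $2m(m+1) \pmod{10} \in \{0,2,4\}$ and $j(j+1) \pmod{10} \in \{0,2,6\}$, so the sum of exponents can be $\equiv 8 \pmod{10}$ only when $2m(m+1) \equiv 2$ and $j(j+1) \equiv 6 \pmod{10}$, which forces $m \equiv j \equiv 2 \pmod{5}$ and hence $5 \mid (2m+1)(2j+1)$. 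There is no genuine new obstacle here; the substantive work was done in the previous theorem, and the only fresh input required for the corollary is the parity observation that disposes of the $q\,\omega(q^2)$ contribution.
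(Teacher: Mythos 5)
Your proof is correct and is essentially the paper's own argument: the paper deduces the corollary from the preceding theorem on $\mathrm{spt}_{\nu}(10n+8)$, whose proof consists of exactly the steps you give --- the decomposition $\nu(-q)=q\,\omega(q^2)+(-q^2;q^2)_{\infty}\psi(q^2)$ from Theorem \ref{nuqthm}, the parity observation that $q\,\omega(q^2)$ contributes only odd powers, and the reduction modulo $5$ via $(q^2;q^2)_{\infty}^5\equiv(q^{10};q^{10})_{\infty}$ followed by the double application of Jacobi's identity and the residue analysis of $2m(m+1)+j(j+1)\pmod{10}$. No substantive difference from the paper's route.
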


\section{Some results on generalized third order mock theta functions}\label{genmock}

In \cite[p.~78]{geaquart}, Andrews gave generalizations of five of the seven third order mock theta functions, and proved that when $\a=q^r, r\in\mathbb{Z}^{+}$,  they are indeed mock theta functions. In particular, he defined
\begin{align*}
\omega(\a;q)&:=\sum_{n=0}^{\infty}\frac{q^{2n^2}\a^{2n}}{(q;q^2)_{n+1}(\a^2q^{-1};q^2)_{n+1}},\\
\nu(\a;q)&:=\sum_{n=0}^{\infty}\frac{q^{n(n+1)}}{(-\a^2q^{-1};q^2)_{n+1}}.
\end{align*}
It is easy to see that for $\a=q$, the above functions are equal to $\omega(q)$ and $\nu(q)$ defined in \eqref{omegaq} and \eqref{nuq} respectively. In this section, we give  generalizations of two of the results from previous sections. For example, the following theorem holds:
\begin{theorem}\label{qomegaqgen}
For $|\a^2|<|q|<1$, we have
\begin{align}\label{qomegaqgeneq}
\a^4q^{-2}\omega(\a^2;q^2)&=\frac{\a^2(q^6;q^4)_{\infty}}{(\a^2;q^2)_{\infty}}+\frac{\a^2(q^6;q^4)_{\infty}(q^4;q^4)_{\infty}}{(\a^2q^2;q^2)_{\infty}}\sum_{n=2}^{\infty}\frac{q^{2n-2}(\a^2q^2;q^2)_{n-2}}{(q^{4};q^2)_{2n-1}(q^{4n+4};q^4)_{\infty}}\nonumber\\
&\quad-\frac{\a^2q^{-1}(q^2;q^4)_{\infty}}{2(1+\a^2q^{-1})(\a^2;q^2)_{\infty}}\left(\sum_{n=0}^{\infty}-\sum_{n=-\infty}^{-1}\right)\frac{(-q;q^2)_{n}q^n}{(-\a^2q;q^2)_n}.
\end{align}
\end{theorem}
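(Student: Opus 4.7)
The plan is to apply the four-parameter identity \eqref{gea90_thm1} in a way that naturally produces $\omega(\a^2;q^2)$ via the limit $A \to 0$ and a partition-type series on the other side, mirroring the structure of the first proof of Theorem \ref{qomegaq}. First I would take \eqref{gea90_thm1}, replace $q$ by $q^2$, set $B = \a^2 q^2$ and $a = -b = q$, and let $A \to 0$. Using $(A^{-1};q^2)_m (Abq^2/a)^m \to q^{m^2+m}$ and $(-Abq^2;q^2)_\infty \to 1$, this yields the $\a$-analogue of \eqref{5}:
\begin{equation*}
\sum_{n=0}^\infty \frac{(\a^2 q^2;q^2)_n\,q^{2n}}{(q^6;q^4)_n} = -\frac{q^{-1}(\a^2 q^2;q^2)_\infty}{(q^6;q^4)_\infty}\,\nu(\a q;q) + q^{-1}(1-q^2)\sum_{m=0}^\infty \frac{(-q;q^2)_m\,(-q)^m}{(-\a^2 q;q^2)_{m+1}},
\end{equation*}
where the first sum on the right is identified with $\nu(\a q;q)$ via \eqref{nuq}, since $-\a^2 q = -(\a q)^2 q^{-1}$.

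Next, I would relate $\nu(\a q;q)$ to $\omega(\a^2;q^2)$ through an $\a$-generalization of the Fine identity $\nu(q)+q\,\omega(q^2)=(-q^2;q^2)_\infty^3 (q^2;q^2)_\infty$ used in the proof of Theorem \ref{qomegaq}. The required $\a$-analogue, linking $\nu(\a q;q)$ and $\a^2 q\,\omega(\a^2;q^2)$ with an infinite-product correction, can be read off from Andrews \cite{geaquart} (where $\omega(\a;q)$ and $\nu(\a;q)$ are studied in detail) or derived from scratch by a second application of \eqref{gea90_thm1} in a complementary configuration. Running the transformation chain of \eqref{string} with $\a^2 q^2$ in place of $q^2$ inside the relevant Pochhammer then tells us that $\sum_{n=0}^\infty (\a^2 q^2;q^2)_n q^{2n}/(q^6;q^4)_n$ differs from the partition-type sum $\sum_{n=2}^\infty q^{2n-2}(\a^2 q^2;q^2)_{n-2}/[(q^4;q^2)_{2n-1}(q^{4n+4};q^4)_\infty]$ only by an explicit $\a$-dependent product factor; this supplies the first two RHS components of \eqref{qomegaqgeneq}, namely the boundary term $\a^2(q^6;q^4)_\infty/(\a^2;q^2)_\infty$ and the partition-type sum with its prefactor $\a^2(q^6;q^4)_\infty(q^4;q^4)_\infty/(\a^2 q^2;q^2)_\infty$.

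For the remaining one-sided sum $\sum_{m\geq 0}(-q;q^2)_m(-q)^m/(-\a^2 q;q^2)_{m+1}$, I would invoke Ramanujan's ${}_1\psi_1$ summation \eqref{1psi1sf} with $q\to q^2$, $a=-q$, $b=-\a^2 q$, $z=q$ to evaluate the bilateral companion $\sum_{n=-\infty}^\infty (-q;q^2)_n q^n/(-\a^2 q;q^2)_n$ as an infinite product; writing the one-sided sum as half of $\sum_{n\ge 0}-\sum_{n\le -1}$ of the corresponding summand plus half the bilateral total and then collapsing the product through Euler's identity \eqref{ei} produces the coefficient $\a^2 q^{-1}(q^2;q^4)_\infty/[2(1+\a^2 q^{-1})(\a^2;q^2)_\infty]$ appearing in the theorem's last term. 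Collecting all three contributions and solving for $\a^4 q^{-2}\omega(\a^2;q^2)$ yields \eqref{qomegaqgeneq}. The main obstacle will be the generalized Fine-type identity linking $\nu(\a q;q)$ and $\omega(\a^2;q^2)$: in the case $\a = 1$ the classical Fine identity suffices, but for general $\a$ one must either adapt the relevant results in \cite{geaquart} or derive the needed relation afresh via a parallel application of \eqref{gea90_thm1} or Bailey's ${}_{10}\phi_9$ transformation \eqref{baitra}. A secondary challenge is the careful bookkeeping of $\a$-dependent prefactors needed to match the precise Pochhammer indexing of \eqref{qomegaqgeneq}.
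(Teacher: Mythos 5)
Your overall architecture is the one the paper has in mind: the paper in fact gives no proof of Theorem \ref{qomegaqgen}, stating only that the argument parallels the proofs of the special cases and begins with Andrews' identity $\nu(-\a;-q)=\frac{(q^2;q^2)_{\infty}(-q^2;q^2)_{\infty}^{2}}{(\a^4q^{-2};q^4)_{\infty}}+\a^2q^{-1}\omega(\a^2;q^2)$, which is displayed at the end of Section \ref{genmock} --- so the ``generalized Fine identity'' you flag as the main obstacle is already supplied. Your treatment of the Lambert-type piece (one-sided sum equals one half of $\bigl(\sum_{n\ge 0}-\sum_{n\le -1}\bigr)$ plus one half of the full bilateral sum, the latter a ${}_1\psi_1$ with base $q^2$, $a=-q$, $b=-\a^2q$, $z=q$, whose convergence condition is precisely $|\a^2|<|q|<1$) is also the right mechanism and explains the shape and hypothesis of the last term of \eqref{qomegaqgeneq}.

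The genuine gap is your specialization $B=\a^2q^2$. With $a=-b=q$ that choice makes the first sum on the right of \eqref{gea90_thm1} equal to $\sum_{m\ge0}q^{m^2+m}/(-\a^2q;q^2)_{m+1}=\nu(\a q;q)$, and Andrews' identity (applied with $\a\mapsto-\a q$, $q\mapsto-q$) converts this into $\omega(\a^2q^2;q^2)$, \emph{not} $\omega(\a^2;q^2)$; no simple product relation bridges the two, so your chain proves a different identity (its other pieces also mismatch: after extracting $(1+\a^2q)$ your Lambert summand carries $(-\a^2q^3;q^2)_m$ rather than the theorem's $(-\a^2q;q^2)_n$, and your left-hand sum has $(q^6;q^4)_n$ where the theorem's partition-type sum requires $(q^6;q^4)_{m+1}$). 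The choice that matches \eqref{qomegaqgeneq} is $B=\a^2$ (still $a=-b=q$, $A\to0$, base $q^2$): then the first sum is $\sum_{m\ge0}q^{m^2+m}/(-\a^2q^{-1};q^2)_{m+1}=\nu(\a;q)$, which by the displayed identity with $(\a,q)\to(-\a,-q)$ equals $\frac{(q^2;q^2)_\infty(-q^2;q^2)_\infty^2}{(\a^4q^{-2};q^4)_\infty}-\a^2q^{-1}\omega(\a^2;q^2)$; the second term becomes $\frac{q^{-1}(1-q^2)}{1+\a^2q^{-1}}\sum_{m\ge0}\frac{(-q;q^2)_m q^m}{(-\a^2q;q^2)_m}$ (note $(-b)^m=q^m$, not $(-q)^m$ as you wrote), whose summand and the factor $1+\a^2q^{-1}$ are exactly those of the third term of \eqref{qomegaqgeneq}; and the left side splits as $\sum_{n\ge0}\frac{(\a^2;q^2)_nq^{2n}}{(q^6;q^4)_n}=1+(1-\a^2)q^2\sum_{m\ge0}\frac{(\a^2q^2;q^2)_m q^{2m}}{(q^6;q^4)_{m+1}}$, which upon multiplying the whole identity by $\a^2(q^6;q^4)_\infty/(\a^2;q^2)_\infty$ yields the first two terms of the theorem (the shifted index is what the $(\a^2q^2;q^2)_{n-2}$ and $(q^4;q^2)_{2n-1}$ in the stated sum encode). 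With that correction, the ${}_1\psi_1$ evaluation of the bilateral series cancels the product term $\a^2q^{-1}\frac{(q^2;q^2)_\infty(-q^2;q^2)_\infty^2}{(\a^4q^{-2};q^4)_\infty}$ exactly, leaving \eqref{qomegaqgeneq}; as written, however, your proposal would not reach the stated theorem.
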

When $\a=q$, we obtain Theorem \ref{qomegaq} with $q$ replaced by $q^2$. It may be worthwhile seeing if the above theorem has an interesting partition-theoretic interpretation. 

Similarly, for $|\a^2|<|q|<1$, one has 
\begin{align}\label{genalpha}
\frac{\nu(-\a;-q)}{(-q^2;q^2)_{\infty}}&=\frac{-1}{(-q^2;q^2)_{\infty}}\sum_{m=0}^{\infty}(\a^{-2}q^{3};q^2)_m(-\a^2q^{-1})^{m}\nonumber\\
&\quad+\frac{2(q^2;q^2)_{\infty}}{(-q;q)_{\infty}(\a^2;q^2)_{\infty}(\a^2q^{-1};q^2)_{\infty}}\sum_{m=0}^{\infty}\frac{(\a^2q^{-1};q)_{m}(-q;q)_{m}q^{m(m+1)/2}}{(q;q)_{m}(-\a^2q^{-1};q^2)_{m+1}}.
\end{align}
When $\a=q$, one gets
\begin{equation*}
\frac{\nu(-q)}{(-q^2;q^2)_{\infty}}=\frac{-S_1(q)}{(-q^2;q^2)_{\infty}}+2S_2(q),
\end{equation*}
which is the identity obtained from \eqref{vi} and \eqref{nufinal}. It remains to be seen if a full generalization of Theorem \ref{nuqthm} for $\nu(-\a;-q)$ exists. 

We refrain ourselves from giving proofs of \eqref{qomegaqgeneq} and \eqref{genalpha} since they involve similar ideas as those involved in their special cases proved in this paper. We only mention, however, that one starts the proofs by using the following identity from \cite[p.~78, Equation (3c)]{geaquart}:
\begin{equation*}
\nu(-\a;-q)=\frac{(q^2;q^2)_{\infty}(-q^2;q^2)_{\infty}^{2}}{(\a^4q^{-2};q^4)_{\infty}}+\a^2q^{-1}\omega(\a^2;q^2),
\end{equation*}
which gives \eqref{thm1.1} as a special case when $\a=q$.

\begin{center}
\textbf{Acknowledgements}

\end{center}
The second author was funded in part by the grant NSF-DMS 1112656 of
Professor Victor H.~Moll of Tulane University, whom he sincerely thanks for
this support. The third author was partially supported by a grant ($\#280903$) from the Simons Foundation.

\end{document}